\documentclass[runningheads]{llncs}

\usepackage{graphicx}
\usepackage{microtype}
\usepackage{amssymb, amsmath, mathtools}

\begin{document}

\title{Constrained Global Optimization by Smoothing\thanks{Supported by the Volkswagen Foundation (Az.: 9C090, 2022), the German Research Foundation (Project 416228727–SFB 1410), and the National Research Fund of Ukraine (Project 2020.02/0121).}}

\author{Vladimir Norkin\inst{1, 2} \and Alois Pichler\inst{3} \and Anton Kozyriev\inst{2}}
\authorrunning{Vladimir Norkin et al.}

\institute{V.M.Glushkov Institute of Cybernetics, National Academy of Sciences of Ukraine, Kyiv, 03178 Ukraine \and Igor Sikorsky Kyiv Polytechnic Institute, National Technical University of Ukraine, Kyiv, 03056, Ukraine \and Department of Mathematics, Chemnitz University of Technology, Chemnitz, 09126, Germany}

\maketitle

\begin{abstract}
This paper proposes a novel technique called "successive stochastic smoothing" that optimizes nonsmooth and discontinuous functions while considering various constraints. Our methodology enables local and global optimization, making it a powerful tool for many applications. First, a constrained problem is reduced to an unconstrained one by the exact nonsmooth penalty function method, which does not assume the existence of the objective function outside the feasible area and does not require the selection of the penalty coefficient. This reduction is exact in the case of minimization of a lower semicontinuous function under convex constraints. Then the resulting objective function is sequentially smoothed by the kernel method starting from relatively strong smoothing and with a gradually vanishing degree of smoothing. The finite difference stochastic gradient descent with trajectory averaging minimizes each smoothed function locally. Finite differences over stochastic directions sampled from the kernel estimate the stochastic gradients of the smoothed functions. We investigate the convergence rate of such stochastic finite-difference method on convex optimization problems. The "successive smoothing" algorithm uses the results of previous optimization runs to select the starting point for optimizing a consecutive, less smoothed function. Smoothing provides the "successive smoothing" method with some global properties. We illustrate the performance of the "successive stochastic smoothing" method on test-constrained optimization problems from the literature.

\keywords{Constrained global optimization  \and Exact penalty \and Smoothing \and Finite-difference stochastic gradient.}
\end{abstract}

\section{Introduction}
Global optimization is a relevant area of research. It is the subject of a large number of articles in the fundamental encyclopedia of optimization \cite{Floudas2009}, as well as of recent monographs \cite{Horst_Pardalos_2013,Sergeyev_Kvasov_2017,Strongin_Sergeyev_2014} among others. Most global optimization methods are designed to optimize Lipschitz or smooth functions. The focus of the present paper is on optimization of non-Lipschitzian continuous as well as  discontinuous functions, where gradient and subgradient methods have no sense.

The need for global optimization of nonsmooth and discontinuous functions arises in many applications: optimal control in parametric strategies \cite[sec. 15.1.3]{Floudas_Pardalos_1999}, \cite{Batukhtin_Mayboroda_1995,Knopov_Norkin_2022}, training neural networks with discontinuous activation functions \cite{longo2022rham} and neural networks with categorical features \cite{Ustimenko_Prokhorenkova_2020}, engineering optimization \cite{Conn_Mongeau_1998,Imo_Leech_1984,Yang_2010}, probability optimization with discrete distribution, lexicographic optimization, two-level optimization. Note that difficult continuous problems with nonconvex inequality constraints can simply be reduced to unconditional discontinuous optimization problems using exact discontinuous penalty functions \cite{Batukhtin_1993}.

Relatively few ideas and methods are used for global optimization of discontinuous functions: branch and bound methods, group random search methods (genetic algorithms, simulated annealing), Nelder-Mead method, approximation and smoothing methods. Problems of discontinuous optimization with constraints are generally little discussed.

In a number of works, V.D. Batukhtin with co-authors \cite{Batukhtin_Mayboroda_1995}, \cite{Batukhtin_1993} proposed an approximate method for optimizing discontinuous functions, in which the original function is locally approximated by linear functions and is minimized by generalized gradient methods that use gradients of linear approximations. In these works, the theory of generalized approximation gradients of discontinuous functions is also constructed and the necessary optimality conditions are obtained.

In \cite{Gup79,MGN87} a smoothing method was developed and substantiated for local unconstrained optimization of non-smooth Lipschitz functions and in \cite{ENW95,Gupal_Norkin_1977} it was extended to discontinuous functions. In this method, the initial function is approximated by a sequence of smooth averaged functions, and iterative stochastic optimization methods are used for local optimization of the latter. In \cite{ENW95} the theory of generalized mollified gradients of discontinuous functions is constructed, connections with other known concepts of generalized gradients are investigated and necessary optimality conditions are obtained. The advantage of the proposed smoothing method is its geometric transparency and the possibility to  apply stochastic gradient methods to optimization of nonconvex non-smooth and discontinuous functions. 

But the disadvantage of both approaches, approximation and smoothing, is their local nature, i.e., focusing on finding local minima.

In this article, we further develop the smoothing method to solve global nonsmooth and discontinuous optimization problems under general (basically convex) constraints. Previously, we reduce the problem of conditional optimization to a problem of unconstrained optimization by exact penalty function methods. The smoothing method was invented by V. Steklov (W. Stekloff) \cite{Steklov_1907} and has long been used in analysis and computational mathematics \cite{Chagas_2017,ENW95}. In the works \cite{Gup79,Mayne_Polak_1984,MGN87} local Steklov smoothing, i.e. averaging over hypercubes was used to study and optimize non-smooth Lipschitz functions by stochastic finite-difference methods. To ensure the local convergence of the method to stationary points of the optimization problem, the smoothing parameter is directed to zero in coordination with steps of the method. Subsequently, the finite-difference methods of local optimization by the idea of smoothing were considered, for example, in the works \cite{Duchi_2015,Nesterov_Spokoiny_2017,shamir2017optimal}. In these methods usually the smoothing parameter is kept fixed and/or quite small, i.e. smoothing is used just for local smoothing.

The important difference between the above mentioned works and the present article is that the latter uses an iterative variable smoothing, starting with a fairly large smoothing parameter and ending with a small parameter. This strategy of changing the smoothing parameter gives the method some global  properties (see \cite[pp. 135-137]{MGN87}, \cite{Norkin_2020}). As the smoothing parameter decreases, the averaging functions uniformly or graphically approximate  the original function, so the minima of the approximate functions approach to the  minima of the original function. Regarding global optimization, the idea of the smoothing method is that smoothing eliminates shallow small local minima and does not much change the deep minima.

One more interesting application of the smoothing idea in optimization was proposed in \cite{Arikan_Burachik_Kaya_2020,Burachik_Kaya_2021}. The authors described the behavior of critical points of the Steklov smoothed functions when changing the smoothing parameter by a system of ordinary differential equations. If one takes as a starting point a global minimum of a strongly smoothed function, then by solving these equations one potentially can go over a critical curve to a global minimum of the original function. Remark that this idea was also discussed in \cite[pp. 135-137]{MGN87}, \cite{Norkin_2020}. However, this differentiable path following approach can only be applied to the unconditional global optimization of functions given by analytical expressions, for example, for polynomial functions.

In this paper, we propose a successive stochastic smoothing method combined with critical path following for optimization of continuous and discontinuous functions under constraints. The structure of the proposed approach is the following.

First, the constrained problem is reduced an unconstrained one by some  exact penalty function method. In particular, it is proposed to use the exact penalty method from \cite{Galvan_2021,Norkin_2020}. The method does not require the existence of the objective function outside the feasible domain and does not require selection of penalty coefficient. Discontinuous exact penalty functions can also be employed \cite{Batukhtin_1993}.

For optimization of the obtained penalized unconstrained problems we use a successive kernel smoothing method with gradual vanishing of the smoothing parameter. Here kernel can be any probability density function dependent on smoothing (width) parameter. The local convergence of the method is theoretically validated for the case of non-Lipschitz continuous functions and for the so-called strongly lower semicontinuous functions \cite{ENW95}.

For the practical implementation of the smoothing method we exploit a stochastic finite-difference representation of gradients of the smoothed functions as in \cite{Steklov_1907,Gup79,MGN87,Nesterov_Spokoiny_2017,shamir2017optimal}, namely we use central finite differences in random directions as in \cite{Nesterov_Spokoiny_2017,shamir2017optimal}. Having such finite difference stochastic gradients, one can use contemporary stochastic optimization methods \cite{Bottou_Curtis_Nocedal_2018,Duchi_2011,Duchi_2015,Kingma_Ba_2017,Nemirovski_2009} for the optimization of the smoothed functions. In particular we use  stochastic finite-difference gradient method with trajectory averaging \cite{Nemirovski_2009} and with batch estimation of stochastic gradients. 

In the literature (see, e.g., \cite{Nesterov_Spokoiny_2017,shamir2017optimal}) the described approach is applied to the local optimization of nonsmooth Lipschitzian or smooth convex functions.  In the present paper we extend this approach to optimization of non-Lipschtzian continuous and discontinuous functions under convex constraints. An important difference of our modification of the approach is that we minimize a sequence of the smooth approximating functions starting from rough approximations and moving to more and more accurate approximations. This strategy provides the method some global properties as explained in \cite{Norkin_2020}, the method avoids some shallow local minima. Another important element of our method is critical path following, when transition from the minimum point of one smoothed function to an initial point for the minimization of another smoothed function is carried out by the Gelfand-Zetlin-Nesterov valley step (cf. \cite{Gelfand_Tsetlin_1962,Nesterov_1983}). 

We also evaluate the performance of the proposed method on nonsmooth Lipschitzian convex functions. We show that in this case the rate of convergence is (roughly) proportional to $\sqrt{n/(Kt)}$, where $t$ is the iteration number, $n$ is the dimension of the problem space, $K$ is the batch size when estimating the stochastic gradients. This rate of convergence corresponds to results of \cite{Nesterov_Spokoiny_2017,shamir2017optimal}.

\section{Reduction of a constrained optimization problem to an unconstrained one by exact penalty function methods}
\label{sec:penalty}

Let it be necessary to solve a conditional global optimization problem:
\begin{equation}
    \label{eqn1}
    f(x)\to \underset{x\in D\subseteq \mathbb{R}^{n}}{\mathop{\min }},
\end{equation}
where $f(x)$ is a lower semicontinuous function defined on a closed set $ D \subseteq \mathbb{R}^{n} $ such that $ f(x) \to +\infty $ as $ \left \| x \right \| \to +\infty $ and $ x \in D $ ; ${\mathbb{R}}^{n}$ is $ n $-dimensional Euclidean space; $ \left \| \cdot \right \| $ is some norm in $ {{\mathbb{R}}^{n}} $.

To apply smoothing method for constrained optimization problems without having irregularities on boundaries, we first reduce the problem to an unconstrained one. There are several ways to do this. For example, if
$$ D=\left\{ x|{{g}_{j}}(x)\le 0,\,j=1,...,J\,;\,\,{{h}_{k}}(x)=0,\,k=1,...,K \right\} $$
then in the exact penalty function method, the Lipschitz function $ f(x) $ is replaced by functions:
$$ \Phi_1(x):=f\left( x \right)+M\left( \sum\nolimits_{j}{\max \left\{ 0,{{g}_{j}}(x) \right\}}+\sum\nolimits_{k}{\left| {{h}_{k}}(x) \right|} \right), $$
$$ \Phi_2(x):=f\left( x \right)+M\rho_D(x),\;\;\;\;\;\rho_D(x)={{\inf }_{y\in D}}\left\| y-x \right\|, $$
with a sufficiently large penalty multiplier $ M $ and then consider the problem of unconditional optimization $ \Phi_i(x),\;i=1,2, $ (cf. \cite[Proposition 9.68]{RocW98}). Here it is assumed that the functions $ f,{{g}_{j}},{{h}_{k}} $ are defined over the whole space $ {{\mathbb{R}}^{n}} $.

Let $ f : D \to {{\mathbb{R}}^{1}}$ and the set $D \subseteq{{\mathbb{R}}^{n}}$ in (\ref{eqn1}) is convex and closed, i.e., in the representation  $ D=\left\{ x|{{g}_{j}}(x)\le 0,\,j=1,...,J\,;\,\,{{h}_{k}}(x)=0,\,k=1,...,K \right\} $ functions  ${{g}_{j}}$ are continuous and convex, and $ {{h}_{k}} $ are linear. Denote $ {{\pi }_{D}}(x) $ the projection of the point $ x $ on the set $ D $. For a simple set $ D $ given by linear constraints, the problem of searching projection ${{\pi }_{D}}(x)$ is either solved analytically or reduced to a quadratic programming problem. We introduce penalty functions \cite{Galvan_2021,Norkin_2020,Norkin_2022}:
\begin{equation}
    \label{eqn2}
    F_1(x):=f\left( {{\pi }_{D}}(x) \right)+M\left( \sum\nolimits_{j=1}^{J}{\max \left\{ 0,{{g}_{j}}(x) \right\}}+\sum\nolimits_{k=1}^{K}{\left| {{h}_{k}}(x) \right|} \right),	 
\end{equation}
\begin{equation}
    \label{eqn3}
    F_2(x):=f\left( {\pi }_{D}(x)\right)+M\rho_D(x),\;\;\;\;M>0,
\end{equation}
and consider the unconstrained optimization problems:
\begin{equation}
    \label{eqn4}
    F_i(x)\to \min_{x\in\mathbb{R}^n},\;\;\; i=1,2...
\end{equation}
Note that in (\ref{eqn2}), (\ref{eqn3}) function $f$ may not be defined outside the feasible set $D$.
\begin{lemma}
    \label{Lemma 1} \cite{Galvan_2021}. Let function $f$ be lower semicontinuous on a non-empty closed convex set $D$, $M>0$. Then problems (\ref{eqn1}) and (\ref{eqn4}) are equivalent, i.e., each local (global) minimum of one problem is a local (global) minimum of the another, and the optimal values of the problems in the corresponding minima coincide.
\end{lemma}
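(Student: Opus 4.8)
The plan is to exploit the single structural feature that makes this penalty exact \emph{for every} $M>0$: the objective is evaluated at the projection $\pi_D(x)$, which always lies in $D$, so no smallness or largeness condition on the multiplier is ever needed. Write $f^\ast:=\inf_{y\in D}f(y)$ and $F_i(x)=f(\pi_D(x))+M\,P_i(x)$, where $P_1(x)=\sum_{j}\max\{0,g_j(x)\}+\sum_{k}|h_k(x)|$ and $P_2(x)=\rho_D(x)$. First I would record three elementary facts. (i) Since $\pi_D(x)\in D$ for every $x$, we have $f(\pi_D(x))\ge f^\ast$. (ii) $P_i(x)\ge 0$, and $P_i(x)=0$ iff $x\in D$: for $P_1$ because feasibility means all constraints hold simultaneously, and for $P_2$ because $D$ is closed. (iii) If $x\in D$ then $\pi_D(x)=x$ and $P_i(x)=0$, so $F_i(x)=f(x)$. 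Combining (i)--(ii) gives $F_i(x)\ge f^\ast$ on all of $\mathbb{R}^n$, while (iii) gives $F_i\equiv f$ on $D$; hence $\inf_{\mathbb{R}^n}F_i=f^\ast$.

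The global equivalence then falls out immediately. If $x^\ast$ solves \eqref{eqn1}, then $F_i(x^\ast)=f(x^\ast)=f^\ast=\inf F_i$, so $x^\ast$ minimizes $F_i$. Conversely, if $\bar{x}$ minimizes $F_i$, then $f(\pi_D(\bar{x}))+M\,P_i(\bar{x})=f^\ast$; since the two summands are bounded below by $f^\ast$ and $0$, I would force $P_i(\bar{x})=0$ and $f(\pi_D(\bar{x}))=f^\ast$, i.e.\ $\bar{x}\in D$ and $\pi_D(\bar{x})=\bar{x}$ is optimal for $f$ over $D$. The optimal values coincide by construction.

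For the local statement I would argue both directions. $(\Rightarrow)$ Let $x^\ast$ be a local minimizer of $f$ on $D$, with witnessing neighbourhood $U$. Using continuity (nonexpansiveness) of the projection together with $\pi_D(x^\ast)=x^\ast$, for $x$ near $x^\ast$ one has $\pi_D(x)\in U\cap D$, hence $f(\pi_D(x))\ge f(x^\ast)$; adding $M\,P_i(x)\ge 0$ and recalling $F_i(x^\ast)=f(x^\ast)$ gives $F_i(x)\ge F_i(x^\ast)$ locally. $(\Leftarrow)$ Let $x^\ast$ be a local minimizer of $F_i$. I first show $x^\ast\in D$: if not, I move along the segment $x_t=(1-t)x^\ast+t\,\pi_D(x^\ast)$, $t\in(0,1]$. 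The geometric key is that every point of this segment projects onto $\pi_D(x^\ast)$, so $f(\pi_D(x_t))$ stays constant while the penalty strictly decreases --- for $P_2$ because $\rho_D(x_t)=(1-t)\rho_D(x^\ast)$, and for $P_1$ because $P_1$ is convex with $P_1(\pi_D(x^\ast))=0$ and $P_1(x^\ast)>0$, whence $P_1(x_t)\le(1-t)P_1(x^\ast)$. In either case $F_i(x_t)<F_i(x^\ast)$, contradicting local minimality; thus $x^\ast\in D$. Since $F_i\equiv f$ on $D$ near $x^\ast$, local minimality of $F_i$ transfers at once to local minimality of $f$ over $D$.

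The delicate step is the $(\Leftarrow)$ direction, and specifically the claim $x^\ast\in D$, because it rests on the fact that the inward segment $[x^\ast,\pi_D(x^\ast)]$ projects entirely onto $\pi_D(x^\ast)$ --- a standard consequence of the variational inequality $\langle x^\ast-\pi_D(x^\ast),\,z-\pi_D(x^\ast)\rangle\le 0$ for all $z\in D$ that characterizes the metric projection onto a convex set. This is what keeps $f(\pi_D(\cdot))$ from interfering with the descent in the penalty. I therefore expect to need the projection to be well defined, single-valued and continuous; this is automatic for the Euclidean norm, and for a general norm I would state the argument under the convention that $\pi_D$ denotes the assumed well-defined metric projection. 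Everything else reduces to the elementary bounds recorded in the first paragraph, with lower semicontinuity of $f$ (and coercivity) entering only to guarantee that the minima in question are attained and that $F_i=f\circ\pi_D+M P_i$ is itself lower semicontinuous.
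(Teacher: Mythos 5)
Your proof is correct and complete. Note that the paper itself gives no argument for this lemma---it is quoted from \cite{Galvan_2021}---so there is no in-paper proof to compare against; your write-up supplies exactly the standard argument behind that citation. The three elementary facts (objective evaluated at $\pi_D(x)\in D$ is bounded below by $\inf_D f$, the penalty vanishes precisely on $D$, and $F_i\equiv f$ on $D$) settle the global claim, and the two local directions are handled correctly: nonexpansiveness of $\pi_D$ for $(\Rightarrow)$, and for $(\Leftarrow)$ the observation that the segment $[x^\ast,\pi_D(x^\ast)]$ projects onto $\pi_D(x^\ast)$ (via the variational inequality for the Euclidean projection onto a convex set), so the objective term is frozen while the penalty strictly decreases, forcing $x^\ast\in D$. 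Your caveat is the right one: the segment-projection step and single-valuedness of $\pi_D$ are specific to the Euclidean norm (consistent with the paper's references to \cite{RocW98}); for a general norm in (\ref{eqn3}) one would need to fix a selection of the metric projection and re-examine that step. The only other hypotheses you invoke---lower semicontinuity and coercivity to guarantee attainment and finiteness of $\inf_D f$---are exactly those assumed around (\ref{eqn1}) and (\ref{eqn4}).
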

\begin{remark}
    If $D$ is a convex closed set, then function ${{\rho }_{D}}(x)$ is continuous \cite[Example 1.20]{RocW98} and the mapping ${{\pi }_{D}}(x)$ is single valued and continuous on ${{\mathbb{R}}^{n}}$ \cite[Example 2.25]{RocW98}. If function $f$ is continuous (lower semicontinuous) on a convex closed set $D$, then function (\ref{eqn3}) is continuous (lower semicontinuous) on $\mathbb{R}^n$.
\end{remark}
\begin{lemma}
    \label{Lemma 2} \cite{Galvan_2021}. If function $f$ is Lipschitzian with constant $L$ on a convex set $D$, then the function $F_2(x)$ defined by equality (\ref{eqn3}) is also Lipschitzian with constant $(L+2M)$ on the whole space ${{\mathbb{R}}^{n}}$.
\end{lemma}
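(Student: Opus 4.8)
The plan is to verify the Lipschitz estimate directly, showing that $\left| F_2(x)-F_2(x') \right| \le (L+2M)\left\| x-x' \right\|$ for all $x,x'\in\mathbb{R}^n$. I would split $F_2$ into its two constituents, the composed objective $x\mapsto f(\pi_D(x))$ and the penalty $x\mapsto M\rho_D(x)$, and bound the increment of each separately. The single ingredient that does all the work is the \emph{nonexpansiveness} of the metric projection onto the convex closed set $D$, namely $\left\| \pi_D(x)-\pi_D(x') \right\|\le \left\| x-x' \right\|$; everything else reduces to the triangle inequality.

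First I would establish (or simply cite) this nonexpansiveness. Since $D$ is convex and closed, $\pi_D$ is single valued (as already noted in the Remark), and the projection is characterized by the variational inequality $\langle x-\pi_D(x),\,y-\pi_D(x)\rangle\le 0$ for all $y\in D$. Writing this inequality at $x$ with $y=\pi_D(x')$ and at $x'$ with $y=\pi_D(x)$ and adding the two yields $\left\| \pi_D(x)-\pi_D(x') \right\|^2\le \langle x-x',\,\pi_D(x)-\pi_D(x')\rangle$, whence $\left\| \pi_D(x)-\pi_D(x') \right\|\le \left\| x-x' \right\|$ by Cauchy--Schwarz. This firm nonexpansiveness is the crux of the argument.

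With this in hand the two bounds are immediate. For the objective part, both $\pi_D(x)$ and $\pi_D(x')$ lie in $D$, so the $L$-Lipschitz continuity of $f$ on $D$ together with nonexpansiveness gives $\left| f(\pi_D(x))-f(\pi_D(x')) \right|\le L\left\| \pi_D(x)-\pi_D(x') \right\|\le L\left\| x-x' \right\|$. For the penalty part I would use $\rho_D(x)=\left\| x-\pi_D(x) \right\|$ and the reverse triangle inequality, $\left| \rho_D(x)-\rho_D(x') \right|\le \left\| (x-\pi_D(x))-(x'-\pi_D(x')) \right\|\le \left\| x-x' \right\|+\left\| \pi_D(x)-\pi_D(x') \right\|\le 2\left\| x-x' \right\|$, so that $M\left| \rho_D(x)-\rho_D(x') \right|\le 2M\left\| x-x' \right\|$. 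Adding the two estimates and applying the triangle inequality to $F_2(x)-F_2(x')$ gives the claimed constant $L+2M$, and symmetry in $x,x'$ supplies the two-sided bound.

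I do not expect any real obstacle beyond the projection inequality: once nonexpansiveness is available, the proof is just two triangle inequalities. It is worth noting where the factor $2$ originates: bounding the distance increment \emph{through} the projection, as above, costs a factor $2$, whereas the sharper standard fact that $\rho_D$ is itself $1$-Lipschitz would already yield the smaller constant $L+M$; the statement is thus phrased with a deliberately loose but uniform bound. The only point requiring care is that the ambient norm be the one for which $\pi_D$ is single valued and nonexpansive (the Euclidean structure underlying the projection, as in the cited Remark); for a general norm the projection need not be nonexpansive and the argument would have to be adapted.
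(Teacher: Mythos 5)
Your proof is correct: the decomposition of $F_2$ into $f\circ\pi_D$ and $M\rho_D$, the nonexpansiveness of the Euclidean projection onto the closed convex set $D$ derived from the variational inequality, and the two triangle-inequality estimates together give exactly the constant $L+2M$. Note that the paper itself offers no proof to compare against --- Lemma~\ref{Lemma 2} is stated as a cited result from the reference \cite{Galvan_2021} --- so your argument stands on its own; your remarks that $\rho_D$ is in fact $1$-Lipschitz (so $L+M$ would suffice) and that nonexpansiveness of $\pi_D$ is a specifically Euclidean phenomenon are both accurate and worth keeping.
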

If some admissible point ${{x}_{0}}\in D$  is known, then the exact penalty function can be constructed as follows \cite{Norkin_2020,Norkin_2022}. Let $x\notin D$ and $y(x)$ be the nearest to $x$ point from the set $D$ lying on the segment connecting ${{x}_{0}}$ and $x$. Let us define the mapping
$${p}_{D}(x)=\left\{ \begin{matrix}
   x, & x\in D,  \\
   y(x), & x\notin D,  \\
\end{matrix} \right.$$
and the penalty functions ${r}_{D}(x)=\left\| x-{p}_{D}(x) \right\|$ and $F_3(x):=f({p}_{D}(x))+M{r}_{D}(x)$. Consider the unconstrained optimization problem:
\begin{equation}
    \label{eqn5}
    F_3(x):=f({p}_{D}(x))+M{r}_{D}(x)\to {\min }_{x\in \mathbb{R}^{n}}, \;\;\;M>0.
\end{equation}
\begin{lemma}
    \label{Lemma 3} \cite{Norkin_2022}. Let $D$ be a non-empty closed set. Then problems (\ref{eqn1}) and (\ref{eqn5}) are globally equivalent, i.e., the global minimum of one task is the global minimum of the other. Moreover, any local minimum of problem (\ref{eqn5}) is a local minimum of problem (\ref{eqn1}). If $D$ is a convex closed set and ${{x}_{0}}$ is an interior point of $D$, any local minimum of problem (\ref{eqn1}) is a local minimum of problem (\ref{eqn5}).
\end{lemma}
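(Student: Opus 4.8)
The plan is to exploit the two defining features of $F_3$: that it coincides with $f$ on $D$ and that $p_D(x)\in D$ for every $x$, so that $F_3(x)=f(p_D(x))+Mr_D(x)\ge f(p_D(x))\ge\min_{z\in D}f(z)$ for all $x\in\mathbb{R}^n$. First I would settle the global equivalence. Picking a global minimizer $x^{*}$ of (\ref{eqn1}) (which exists by lower semicontinuity and coercivity of $f$ on $D$), we have $x^{*}\in D$, hence $F_3(x^{*})=f(x^{*})=\min_D f$; combined with the lower bound above this gives $\inf_{\mathbb{R}^n}F_3=\min_D f$. Conversely, if $\hat x$ minimizes $F_3$ globally, then $f(p_D(\hat x))+Mr_D(\hat x)=\min_D f$ while $f(p_D(\hat x))\ge\min_D f$ and $M>0$ force $r_D(\hat x)=0$, i.e.\ $\hat x\in D$ and $f(\hat x)=\min_D f$. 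Thus the global minimizers of the two problems coincide, and so do the optimal values.

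Next, for the local statements, the key geometric observation is that for $x\notin D$ the points on the segment $[x_0,x]$ strictly between $y(x)$ and $x$ lie outside $D$, and for any such point $x'$ one has $p_D(x')=y(x)$, whence $F_3(x')=f(y(x))+M\|x'-y(x)\|$. I would use this to show that \emph{every} local minimizer $\hat x$ of (\ref{eqn5}) lies in $D$: if $\hat x\notin D$, sliding from $\hat x$ toward $y(\hat x)$ along the segment keeps $p_D$ fixed at $y(\hat x)$ but strictly decreases the distance term $M\|\cdot-y(\hat x)\|$, producing points arbitrarily close to $\hat x$ with strictly smaller $F_3$-value, contradicting local minimality. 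Once $\hat x\in D$ is established, $F_3=f$ on a $D$-neighborhood of $\hat x$ while $F_3\ge f\circ p_D$ elsewhere, so comparing with nearby feasible points shows that $\hat x$ is a local minimizer of $f$ on $D$; note that this direction needs no convexity.

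For the reverse implication under the assumption that $D$ is convex closed and $x_0\in\operatorname{int}D$, let $\bar x\in D$ be a local minimizer of $f$ on $D$, so $f(\bar x)\le f(z)$ for all $z$ in some neighborhood $U$ of $\bar x$ intersected with $D$. The only nontrivial case is $\bar x\in\partial D$. For $x\notin D$ near $\bar x$, convexity and interiority guarantee that the segment $[x_0,x]$ meets $\partial D$ in the single point $y(x)$, and I would prove the continuity claim $y(x)\to\bar x$ as $x\to\bar x$: writing $y(x)=x_0+\lambda(x)(x-x_0)$ with $\lambda(x)\in[0,1]$, every limit point of $y(x)$ lies on $[x_0,\bar x]\cap\partial D$, which equals $\{\bar x\}$ because $[x_0,\bar x)\subset\operatorname{int}D$ (the standard line-segment principle for convex sets); boundedness then upgrades this to full convergence $y(x)\to\bar x$, and hence also $\|x-y(x)\|\to0$. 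Given the claim, for $x$ close enough we have $y(x)\in U\cap D$, so local minimality of $\bar x$ yields $f(y(x))\ge f(\bar x)$ and therefore $F_3(x)=f(y(x))+M\|x-y(x)\|\ge f(\bar x)=F_3(\bar x)$; together with $F_3=f\ge f(\bar x)$ on $U\cap D$ this shows that $\bar x$ is a local minimizer of $F_3$.

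I expect the main obstacle to be precisely the continuity claim $y(x)\to\bar x$ in the last paragraph: it is the only place where both convexity of $D$ and the interiority of $x_0$ are genuinely used, and it rests on the segment principle $[x_0,\bar x)\subset\operatorname{int}D$ together with a compactness argument to rule out the radial crossing point $y(x)$ ``sticking'' at an earlier boundary point as $x\to\bar x$. Everything else reduces to the elementary monotonicity of the penalty term $Mr_D$ along the radial segments emanating from $x_0$.
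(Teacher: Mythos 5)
The paper does not prove Lemma~\ref{Lemma 3}; it only cites \cite{Norkin_2022}, so there is no in-paper argument to compare against. Judged on its own, your proof is correct and complete. The three pillars all hold up: (i) the global equivalence follows from the two-sided bound $F_3(x)\ge f(p_D(x))\ge\min_D f$ together with $F_3=f$ on $D$, and $r_D(\hat x)=0\Rightarrow\hat x\in D$ because $D$ is closed; (ii) for the descent argument when $\hat x\notin D$, points $x'$ strictly between $y(\hat x)$ and $\hat x$ do satisfy $p_D(x')=y(\hat x)$, since $(y(\hat x),\hat x]\cap D=\emptyset$ and $[x_0,x']\subset[x_0,\hat x]$, so the radial slide strictly decreases $F_3$ and forces every local minimizer of (\ref{eqn5}) into $D$, with no convexity needed; (iii) the continuity claim $y(x)\to\bar x$ is exactly where convexity and $x_0\in\operatorname{int}D$ enter, and your argument is sound: $y(x)\in\partial D$ (it is a limit of points outside $D$), every limit point of $y(x_k)$ lies in $\partial D\cap[x_0,\bar x]=\{\bar x\}$ by the line-segment principle, and boundedness upgrades subsequential to full convergence. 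You also correctly isolate the one asymmetry in the statement — the reverse local implication genuinely fails without the interiority/convexity hypothesis (e.g., $y(x)$ could jump to a distant boundary point), which is why the lemma states it only under those assumptions.
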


\section{Smoothed functions and their stochastic gradients}
\label{sec:smoothing}

The idea of smoothing is widely used in analysis and optimization in different forms. In what follows we use the so-called Steklov like smoothing (\cite{Steklov_1907}). This kind of smoothing allows to represent gradients of smoothed functions in a convenient for optimization finite-difference form.

Let $\mu (x)$ be some probability kernel on ${{\mathbb{R}}^{n}}$, i.e., $\mu (x)\ge 0$ and  $\int_{{{\mathbb{R}}^{n}}}{\mu (x)dx}=1$. Along with function $F(x)$, $x\in\mathbb{R}^n$, consider the so-called Steklov-Schwartz smoothed functions \cite{ENW95}
\begin{equation}
    {{F}_{h}}(x)=\frac{1}{{{h}^{n}}}\int_{{{\mathbb{R}}^{n}}}{F(x+y)\mu ({y}/{h}\;)dy}
    =\int_{{{\mathbb{R}}^{n}}}{F(x+hz)\mu (z)dz}.
        \label{Steklov-Schwartz}
\end{equation}
(it is assumed that the integrals exist), where $h>0$ is the smoothing parameter. If the function $F(x)$ is locally Lipschitz, then ${{F}_{h}}(x)$ is continuously differentiable and its gradient can be represented as
\begin{equation}
    \label{eqn6}
    \nabla {{F}_{h}}(x)=\frac{1}{{{h}^{n}}}\int_{{{\mathbb{R}}^{n}}}{\partial F(x+y)\mu ({y}/{h}\;)dy}=\int_{{{\mathbb{R}}^{n}}}{\partial F(x+hz)\mu (z)dz}
\end{equation}
(assuming that the integrals exist), where $\partial F(y)$ is the Clarke subdifferential \cite{Clarke_1990} of function $F(\cdot )$ at point $y$. The integral of a multivalued mapping here is understood as the set of integrals of measurable single-valued sections of the mapping $\partial F(\cdot )$ (\cite{Clarke_1990,MGN87}). For non-Lipschitzian functions the representation (\ref{eqn6}) is not defined, so the gradient sampling methods \cite{burke2020gradient} cannot be employed for minimization of $F$.

Classical Steklov smoothing \cite{Steklov_1907} uses uniform distribution on the cube $\{x\in\mathbb{R}^n:\max_i |x_i|\le h/2\}$. Properties of Steklov-smoothed functions were studied, e.g., in \cite{Chagas_2017,Gup79,MGN87}. In \cite{Gup79,MGN87} on the basis of Steklov smoothing, stochastic finite-difference optimization methods were developed for optimization of nonsmooth Lipschitz functions.

\subsection{Ball smoothing of continuous functions}
Consider smoothed (or averaged) functions of the following form (\cite{Duchi_2015,shamir2017optimal}):
\begin{equation}
    \label{ballsmoothing}
    {{F}_{h}}(x)=(v_1)^{-1}\int_{{{V}_{1}}}{F(x+hy)dy},
\end{equation}
where $V_1=\left\{ y\in {{\mathbb{R}}^{n}}:\left\| y \right\|_2\le 1 \right\}$ is the unit ball in $\mathbb{R}^n$ with volume $v_1$. The gradient $\nabla {{F}_{h}}(x)$, when $F(x)$ continuous, is calculated by the surface integral
\begin{eqnarray}
    \nabla {{F}_{h}}(x) &=& \frac{n}{s_1}\int_{{S}_{1}}\frac{1}{2h}{\left( F(x+hy)-F(x-hy) \right)yds,} \nonumber
\end{eqnarray}
where $S_1=\left\{ y\in {{\mathbb{R}}^{n}}:\left\| y \right\|_2=1 \right\}$ is the unit sphere in $\mathbb{R}^n$ with square $s_1$.

Denote $\tilde{y}$ a random vector uniformly distributed on the unit sphere ${{S}_{1}}$, then the gradient $\nabla {{F}_{h}}(x)$ can be represented as mathematical expectations (\cite{Duchi_2015,shamir2017optimal}):
\begin{eqnarray}
    \label{eqn9}
    \nabla {{F}_{h}}(x)&=&\frac{n}{h}{{\mathbb{E}}_{{\tilde{y}}}}\left( F(x+h\tilde{y})-F(x) \right)\tilde{y}
    =\frac{n}{2h}{{\mathbb{E}}_{{\tilde{y}}}}\left( F(x+h\tilde{y})-F(x-h\tilde{y}) \right)\tilde{y}.
\end{eqnarray}
For $L$-Lipschitz function $F$ it holds \cite{shamir2017optimal}:
\begin{equation}
    \label{shamirinequality}
    {{\mathbb{E}}_{{\tilde{y}}}}{{\left\| \frac{1}{2h}\left( F(x+h\tilde{y})-F(x-h\tilde{y}) \right)\tilde{y} \right\|}^{2}}\le C\cdot {{L}^{2}}/n,
\end{equation}
where $C$ is some absolute constant. Our experiments indicate that $C\le 1$.

\subsection{Smoothing of discontinuous functions}

Smoothing of discontinuous functions for optimization purposes has some peculiarities; smoothed functions do not uniformly approximate the original function and may be not differentiable. We limit the consideration to the case of the so-called strongly lower semicontinuous functions $F(x)$, which grow at the infinity not faster than some power of $\|x\|$.
\begin{definition}
    \label{Definition 1} (strongly lower semicontinuous functions, \cite{ENW95}). Function $F:\ {{\mathbb{R}}^{n}}\to {{\mathbb{R}}^{1}}$ is called lower semicontinuous (lsc) at point $x$ if $\underset{k\to \infty }{\mathop{\lim \inf }}\,F({{x}^{k}})\ge F(x)$ for all sequences ${{x}^{k}}\to x$.  Function $F:\ {{\mathbb{R}}^{n}}\to {{\mathbb{R}}^{1}}$ is called strongly lower semicontinuous (strongly lsc) at point $x$, if it is lower semicontinuous at $x$ and there exists a sequence ${{x}^{k}}\to x$ such that it is continuous at $x^{k}$ (for all $x^{k}$) and $F({{x}^{k}})\to F(x)$. Function $F$ is called strongly lower semicontinuous on $X\subseteq \mathbb{R}^{n}$ if this is the case for all $x\in X$.
\end{definition}
The property of strong lower semicontinuity is preserved under continuous transformations. Assume that function $F$ satisfy the condition $|F(x)|\le c_1+c_2\|x\|^{c_3}$ for some constants  $c_1, c_2, c_3\ge 0$ and let $\mu(y)={{(2\pi )}^{-n/2}}{{e}^{-\,{\|y\,\,\|{{\,}^{2}}}/{2}\;}}$, $y\in \mathbb{R}^n$, be the Gaussian kernel. Consider smoothed functions
\begin{eqnarray}
    \label{gaussmoothing}
    {F}_{h}(x)&=&\int_{\mathbb{R}^n}F(x+hy)\mu(y)dy
    =\frac{1}{(2\pi)^{n/2}h^n}\int_{\mathbb{R}^n}F(y)e^{-||y-x||^2/(2h^2)}dy.
\end{eqnarray}
If the function $F$ is not continuous, then we cannot expect the averaged functions ${{F}_{h}}(x)$ to converge to $F$ uniformly. But we don't need that. We need such a convergence of the averaged functions ${{F}_{h}}(x)$ to $F$ that guarantees the convergence of the minima of ${{F}_{h}}(x)$  to the minima of $F$. This property is provided by the so-called epi-convergence of functions \cite[Theorem 7.33]{RocW98}.
\begin{definition}		
    \label{Definition 2} (Epi-convergence \cite{RocW98}). A sequence of functions $\{{{F}^{k}}:{{\mathbb{R}}^{n}}\to \overline{\mathbb{R}},\ k\in \mathbb{N}\}$ epi-converge to a function $F:{{\mathbb{R}}^{n}}\to \overline{\mathbb{R}}$ at a point $x$, if

$\underset{k\to \infty }{\mathop{\lim \inf }}\,{{F}^{k}}({{x}^{k}})\ge F(x)$ for all ${{x}^{k}}\to x$;

$\underset{k\to \infty }{\mathop{\lim }}\,{{F}^{k}}({{x}^{k}})=F(x)$ for some sequence ${{x}^{k}}\to x$.

\noindent
    The sequence ${{\{{{F}^{k}}\}}_{k\in \mathbb{N}}}$ epi-converges to $F$, if this is the case at every $x\in {{\mathbb{R}}^{n}}$.
\end{definition}
\begin{theorem}
    \label{Theorem1} \cite{ENW95}. For a strongly semicontinuous locally integrable function $F:{{\mathbb{R}}^{n}}\to \mathbb{R}$, any associated sequence of averaged functions $\{{{F}_{h}},\ h\in {{\mathbb{R}}_{+}}\}$ epi-converges to $F$, that is, for any sequence ${{h}^{k}}\downarrow 0$ the averaged functions ${F}_{h_k}$ epi-converge to $F$.
\end{theorem}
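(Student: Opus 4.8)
The plan is to verify directly the two conditions in the definition of epi-convergence (Definition \ref{Definition 2}) for the family $F_{h_k}$ along an arbitrary sequence $h_k\downarrow 0$, and to track which hypothesis supplies each one. The lower (liminf) inequality will follow from plain lower semicontinuity of $F$ together with the Gaussian tail decay, whereas the recovery-sequence condition will use the full strength of \emph{strong} lower semicontinuity from Definition \ref{Definition 1}, namely the existence of nearby continuity points of $F$ on which the averaged values can be controlled.

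For the liminf inequality I fix $x$, an arbitrary sequence $x^k\to x$, and $\varepsilon>0$. Lower semicontinuity gives a $\delta>0$ with $F(y)>F(x)-\varepsilon$ whenever $\|y-x\|<\delta$. Writing $F_{h_k}(x^k)=\int_{\mathbb{R}^n}F(x^k+h_k y)\mu(y)\,dy$, I split $\mathbb{R}^n$ into the ``near'' set $A_k=\{y:\|x^k+h_k y-x\|<\delta\}$ and its complement. On $A_k$ the integrand exceeds $F(x)-\varepsilon$, so that contribution is bounded below by $(F(x)-\varepsilon)\,\mu(A_k)$; since $\|x^k-x\|\to0$ and $h_k\to0$ one checks $A_k\supseteq\{y:\|y\|<(\delta-\|x^k-x\|)/h_k\}$, whence $\mu(A_k)\to1$ and this term tends to $F(x)-\varepsilon$. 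The complement is contained in $\{\|y\|\ge R_k\}$ with $R_k\to\infty$, and there I use the standing bound $|F|\le c_1+c_2\|\cdot\|^{c_3}$ together with $\|x^k+h_ky\|\le\|x^k\|+h_k\|y\|$ to dominate the integrand by a fixed polynomial in $\|y\|$; because the Gaussian kernel has finite moments of every order, the tail integral $\int_{\|y\|\ge R_k}(\cdots)\mu(y)\,dy\to0$. Combining, $\liminf_k F_{h_k}(x^k)\ge F(x)-\varepsilon$, and letting $\varepsilon\downarrow0$ yields the required inequality for every $x^k\to x$.

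For the recovery sequence I invoke Definition \ref{Definition 1}: there are points $z_m\to x$ at which $F$ is continuous and with $F(z_m)\to F(x)$. At such a continuity point the pointwise limit $F(z_m+h y)\to F(z_m)$ holds as $h\downarrow0$ for each $y$, and the growth bound supplies a $\mu$-integrable dominating majorant (uniform for bounded $h$), so dominated convergence gives $F_h(z_m)\to F(z_m)$ as $h\downarrow0$. Thus for each $m$ there is $\bar h_m>0$ with $|F_h(z_m)-F(z_m)|\le 1/m$ for all $h\le\bar h_m$. A diagonal argument then builds the recovery sequence: pick indices $k_1<k_2<\cdots$ with $h_k\le\bar h_m$ for all $k\ge k_m$, and set $x^k=z_m$ for $k_m\le k<k_{m+1}$; then $x^k\to x$ and $|F_{h_k}(x^k)-F(x)|\le|F_{h_k}(z_m)-F(z_m)|+|F(z_m)-F(x)|\le 2/m\to0$, i.e.\ $F_{h_k}(x^k)\to F(x)$. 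Together with the liminf inequality this is precisely epi-convergence.

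I expect the main obstacle to be the uniform control of the ``far-field'' part of the smoothing integral in the liminf step: one must ensure that the polynomial growth of $F$ cannot overwhelm the shrinking Gaussian mass on $\{\|y\|\ge R_k\}$, uniformly along both $x^k\to x$ and $h_k\downarrow0$. This is exactly where the moment finiteness of the Gaussian and the standing bound $|F|\le c_1+c_2\|\cdot\|^{c_3}$ are essential, and where the argument would break down for a merely locally integrable $F$ without a growth restriction. A secondary technical point is justifying the interchange of limit and integral at the continuity points $z_m$ by dominated convergence, which again relies on the same growth bound to furnish a $\mu$-integrable majorant valid uniformly for all small $h$.
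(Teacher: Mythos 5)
The paper does not prove this theorem: it is quoted verbatim from \cite{ENW95}, so there is no in-paper argument to compare against. Your proof is correct and is essentially the standard mollifier epi-convergence argument of that reference --- the liminf inequality obtained from plain lower semicontinuity together with concentration of the kernel mass near the origin, and the recovery sequence built from the continuity points $z_m\to x$ with $F(z_m)\to F(x)$ that strong lower semicontinuity supplies, glued by a diagonal selection. One caveat is worth making explicit: as literally stated the theorem assumes only local integrability, which suffices for the compactly supported mollifiers of \cite{ENW95}, whereas for the Gaussian kernel used in this paper your far-field tail estimate and the dominated-convergence step genuinely require the standing growth bound $|F(x)|\le c_1+c_2\|x\|^{c_3}$ imposed in the text just before the theorem; you correctly identify this as the point where the argument would otherwise break down.
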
	
To optimize discontinuous functions, we approximate them by averaged functions. The convolution of a discontinuous function with the corresponding kernel (probability density) improves analytical properties of the resulting function, but increases the computational complexity of the problem, since it transforms the deterministic function into an expectation function, which is a multidimensional integral. Therefore, such an approximation makes sense only in combination with the corresponding stochastic optimization methods. First, we study conditions of continuity and continuous differentiability of the averaged functions.

Thus for the strongly lsc function $F$ the average functions ${{F}_{h}}$ epi-converge to $F$ as $h \downarrow 0$ and each function ${{F}_{h}}$ is analytical with the gradient \cite{Nesterov_Spokoiny_2017}:
\begin{eqnarray}
    \nabla {{F}_{h}}(x) &=& \frac{1}{h^{n+2}}\int_{\mathbb{R}^n}F(y)(y-x)
		\mu \left(\frac{y-x}{h} \right)dy \nonumber \\
    &=& \frac{1}{h}\int_{{{\mathbb{R}}^{n}}}F(x+hz)z\mu(z)dz
    = -\frac{1}{h}\int_{{{\mathbb{R}}^{n}}}F(x-hz)z\mu(z)dz\nonumber \\
    &=& {\mathbb{E}_{\eta }}\frac{1}{2h}[F(x+h\mathbf{\eta })-F(x-h\mathbf{\eta })]\mathbf{\eta },
    \label{gaussfindif}
\end{eqnarray}	
where the random variable ${\bf \eta}$ has the standard normal distribution $\mu(\cdot)$ in $\mathbb{R}^n$ and $\mathbb{E}_{\eta}$ denotes the mathematical expectation over ${\bf \eta}$. Thus, the random vector
\begin{equation}
    \label{eqn12}	
    {{\xi }_{h}}(x,\mathbf{\eta })=\frac{1}{2h}[F(x+h\mathbf{\eta })-F(x-h\mathbf{\eta })]\mathbf{\eta }
\end{equation}
is an estimate of the gradient $\nabla {{F}_{h}}(x)$. For $L$-Lipschitz function $F$ it holds \cite{Nesterov_Spokoiny_2017}:
\begin{equation}
    \label{Nesterov_Spokoiny}
    {{\mathbb{E}}_{{\tilde{y}}}}{{\left\| \frac{1}{2h}\left( F(x+h\tilde{y})-F(x-h\tilde{y}) \right)\tilde{y} \right\|}^{2}}\le n{{L}^{2}}
\end{equation}

\section{Stochastic finite-difference optimization methods for nonsmooth problems}

First, we investigate the rate of convergence of the stochastic finite-difference methods (with the trajectory averaging) on convex Lipschitz functions. The following theorem establishes the rate of convergence of the smoothing method in case of the batch calculation of gradient estimates. This method allows parallelization of the gradient estimation.
\begin{theorem}
    \label{Theorem 2} (A ball smoothing and a batch calculation of the finite-difference gradient estimates). Let's assume that the function $F(\cdot )$ is convex and Lipschitz with some constant $L$ (in the Euclidean norm) on a convex compact set $X\subseteq {{\text{R}}^{n}}$ and the estimate $\left\| X \right\|={{\sup }_{x\in X}}\left\| x \right\|\le D$ is true; the sequence $\left\{ {{x}_{t}} \right\}$ is built recursively according to the following rule:
    \begin{equation}
        \label{eqn13}
        {{x}_{t+1}}={{\pi }_{X}}\left( {{x}_{t}}-{{\rho }_{t}}{{\eta }_{t}} \right),\;\;\;{{x}_{1}}\in X, \;\;\; t=1,2, \ldots,
    \end{equation}
    random directions $\eta_{t}$  in  (\ref{eqn13}) are calculated as follows:
    \begin{equation}
        \label{eqn15} {{\eta }_{t}}=\frac{1}{K}\sum\limits_{k=1}^{K}{\frac{1}{2{{h}_{t}}}\left( F({{x}_{t}}+{{h}_{t}}\tilde{y}_{t}^{k})-F({{x}_{t}}-{{h}_{t}}\tilde{y}_{t}^{k}) \right)\tilde{y}_{t}^{k}},
    \end{equation}
    where $\left\{ \tilde{y}_{t}^{k} \right\}_{k=1}^{K}$ are independent random vectors uniformly distributed on the unit sphere with the center at the origin of coordinates. Then at a constant step $\rho =\frac{D\sqrt{nK}}{L\sqrt{2T(C+K/n)}}$ and a constant smoothing parameter $h_t=h$ the average point ${{\bar{x}}_{T}}=\frac{1}{T}\sum_{t=1}^{T}{{{x}_{t}}}$ satisfies the relation:
    \begin{equation}
        \mathbb{E}{{F}_{h}}({{\bar{x}}_{T}})-\min_{x\in X}{{F}_{h}}({{x}}) \le \frac{LD}{\sqrt{T}}\sqrt{\frac{2n}{K}}\left(C+\frac{K}{n}\right)^{1/2}. \nonumber
    \end{equation}
    For the variable steps $\rho_t =\frac{D\sqrt{nK}}{L\sqrt{2t(C+K/n)}}$ and $\bar{x}_t=\sum_{\tau=1}^t \rho_t x_t/\sum_{\tau=1}^t \rho_t$ it holds
    \begin{equation}
        \mathbb{E}{{F}_{h}}({{\bar{x}}_{t}})-\min_{x\in X}{{F}_{h}}({{x}}) \le \frac{DL}{\sqrt{2}}\sqrt{\frac{n}{K}}\left(C+\frac{K}{n}\right)^{1/2} \frac{2+\ln{t}}{\sqrt{t}-1}. \nonumber
    \end{equation}
    For the variable steps $\rho_t =\frac{D\sqrt{nK}}{L\sqrt{2t(C+K/n)}}$ and smoothing parameters $h_t=L\rho_t/K$ for the averaged points $\bar{x}_t=\sum_{\tau=1}^t \rho_t x_t/\sum_{\tau=1}^t \rho_t$ it holds
    \begin{eqnarray}
        \mathbb{E}{F}({\bar{x}_t})-\min_{x\in X}{{F}}({{x}}) &\le& \frac{DL}{2}\sqrt{\frac{n}{K}} \left(2+C+\frac{K}{n}\right)^{1/2}\frac{2+\ln{t}}{\sqrt{t}-1}.
    \end{eqnarray}
    Here $C$ is some absolute constant from inequality (\ref{shamirinequality}).
\end{theorem}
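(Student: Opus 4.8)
The plan is to carry out the classical projected stochastic (sub)gradient descent analysis with trajectory averaging, while tracking two facts specific to the estimator (\ref{eqn15}). Writing $\zeta_t^k=\frac{1}{2h_t}\bigl(F(x_t+h_t\tilde y_t^k)-F(x_t-h_t\tilde y_t^k)\bigr)\tilde y_t^k$, the gradient representation (\ref{eqn9}) shows that $\mathbb E[\zeta_t^k\mid x_t]=\tfrac1n\nabla F_{h_t}(x_t)$, so $\eta_t$ is unbiased not for $\nabla F_{h_t}(x_t)$ but for its scaled version $\tfrac1n\nabla F_{h_t}(x_t)$; this factor $1/n$ is exactly what propagates into the dimension dependence of the final bounds. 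Since the $\tilde y_t^k$ are i.i.d.\ given $x_t$, expanding $\mathbb E[\|\tfrac1K\sum_k\zeta_t^k\|^2\mid x_t]$ and separating the $K$ diagonal from the $K(K-1)$ off-diagonal terms gives $\mathbb E[\|\eta_t\|^2\mid x_t]=\tfrac1K\mathbb E\|\zeta_t^1\|^2+\tfrac{K-1}{K}\|\tfrac1n\nabla F_{h_t}(x_t)\|^2$. Bounding the first summand by Shamir's inequality (\ref{shamirinequality}), $\mathbb E\|\zeta_t^1\|^2\le CL^2/n$, and the second by $\|\nabla F_{h_t}\|\le L$ (the ball average of an $L$-Lipschitz function is again $L$-Lipschitz), I obtain the key variance estimate $\mathbb E[\|\eta_t\|^2\mid x_t]\le \frac{L^2}{nK}\bigl(C+\frac Kn\bigr)$.

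Next I would write the one-step contraction. Because $\pi_X$ is nonexpansive and the comparison point $x^*$ lies in $X$, $\|x_{t+1}-x^*\|^2\le\|x_t-x^*\|^2-2\rho_t\langle\eta_t,x_t-x^*\rangle+\rho_t^2\|\eta_t\|^2$. Taking conditional expectation, inserting $\mathbb E[\eta_t\mid x_t]=\tfrac1n\nabla F_h(x_t)$ and using convexity of the (smooth, convex) function $F_h$ in the form $\langle\nabla F_h(x_t),x_t-x^*\rangle\ge F_h(x_t)-F_h(x^*)$, I get $\frac{2\rho_t}{n}\bigl(F_h(x_t)-F_h(x^*)\bigr)\le \|x_t-x^*\|^2-\mathbb E[\|x_{t+1}-x^*\|^2\mid x_t]+\rho_t^2\frac{L^2}{nK}(C+\tfrac Kn)$. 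Summing, telescoping, and applying Jensen to the averaged iterate yields $\mathbb E F_h(\bar x_T)-\min_X F_h\le \frac{n\|x_1-x^*\|^2}{2\sum_t\rho_t}+\frac{L^2(C+K/n)}{2K\sum_t\rho_t}\sum_t\rho_t^2$. The first two bounds then follow by substituting the prescribed step sizes and bounding $\|x_1-x^*\|$ by the diameter of $X$: for constant $\rho$ this value of $\rho$ is exactly the minimizer of the two terms (whence the balanced $\sqrt T$ rate), while for $\rho_t\propto t^{-1/2}$ I would use $\sum_{\tau\le t}\tau^{-1/2}\ge\sqrt t-1$ and $\sum_{\tau\le t}\tau^{-1}\le 1+\ln t$ to produce the factor $(2+\ln t)/(\sqrt t-1)$.

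For the third bound, stated for the \emph{true} objective $F$ rather than for $F_h$, the additional ingredient is the smoothing bias: for an $L$-Lipschitz $F$ one has $|F_h(x)-F(x)|\le Lh$, since $F_h$ averages $F$ over a ball of radius $h$. Replacing $F_{h_t}(x_t)-F_{h_t}(x^*)$ by $F(x_t)-F(x^*)-2Lh_t$ in the convexity step adds a term of order $\frac{L}{n}\sum_t\rho_t h_t$ to the right-hand side; the prescribed choice $h_t=L\rho_t/K$ converts this into a multiple of $\frac{L^2}{nK}\sum_t\rho_t^2$, i.e.\ exactly the order of the gradient-noise term, so the two merge and the constant $C+K/n$ is replaced by $2+C+K/n$ (up to the numerical factor). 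The same $t^{-1/2}$ schedule and the same two elementary sum estimates then give the stated $(2+\ln t)/(\sqrt t-1)$ rate for $\mathbb E F(\bar x_t)-\min_X F$.

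The hard part will be the second-moment computation of the first step: the whole shape of the result — the $(C+K/n)/(nK)$ variance, the $\sqrt{n/(KT)}$ rate, and the specific step sizes — is forced by how the dimension-\emph{decreasing} Shamir bound $CL^2/n$ combines with the $L^2/n^2$ bound on $\|\mathbb E[\eta_t\mid x_t]\|^2$ through the batch-averaging identity, and in particular by correctly handling the off-diagonal cross-terms together with the absent factor $n$ in (\ref{eqn15}). Once that estimate is in hand, the remainder is the textbook averaged projected-SGD argument, and only the bookkeeping of constants (notably the diameter bound on $\|x_1-x^*\|$ and the smoothing-bias factor) separates the derived inequalities from the exact constants displayed in the statement.
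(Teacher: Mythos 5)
Your proposal is correct and follows essentially the same route as the paper: the unbiasedness relation $\mathbb{E}[\eta_t\mid x_t]=\tfrac1n\nabla F_{h_t}(x_t)$ from (\ref{eqn9}), the batch variance bound via (\ref{shamirinequality}) and $\|\nabla F_{h_t}\|\le L$, the telescoping projected-SGD recursion with Jensen on the averaged iterate, and the bias term $|F_{h_t}-F|\le Lh_t$ absorbed into the $\sum_t\rho_t^2$ term by the choice $h_t=L\rho_t/K$. The only (harmless) deviation is that you use the exact identity $\mathbb{E}\|\tfrac1K\sum_k\zeta_t^k\|^2=\tfrac1K\mathbb{E}\|\zeta_t^1\|^2+\tfrac{K-1}{K}\|\bar\eta_t\|^2$ where the paper uses the cruder splitting $\|a+b\|^2\le2\|a\|^2+2\|b\|^2$, so your variance estimate is a factor $2$ sharper and still yields the stated bounds with the prescribed step sizes.
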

\begin{proof}
    Let $x^*$ be any fixed point in $X$. By convexity of $X$ the following inequalities are fulfilled:
    \[
        \begin{array}{lcl}
            {{\left\| {{x}^{*}}-{{x}_{t+1}} \right\|}^{2}}&=&{{\left\| {{x}^{*}}-{{\pi }_{X}}({{x}_{t}}-{{\rho }_{t}}{{\eta }_{t}}) \right\|}^{2}}\le {{\left\| {{x}^{*}}-{{x}_{t}}+{{\rho }_{t}}{{\eta }_{t}}) \right\|}^{2}} \\
            &=&{{\left\| {{x}^{*}}-{{x}_{t}} \right\|}^{2}}+2{{\rho }_{t}}\left\langle {{\eta }_{t}},{{x}^{*}}-{{x}_{t}} \right\rangle +\rho _{t}^{2}{{\left\| {{\eta }_{t}} \right\|}^{2}}.
        \end{array}
    \]
    Let us take the conditional mathematical expectation from both parts of this inequality with respect to $\sigma $-algebra generated by the random variable $\left\{ {{x}_{t}} \right\}$:
    $$\mathbb{E}\left[ \left. {{\left\| {{x}^{*}}-{{x}_{t+1}} \right\|}^{2}} \right|{{x}_{t}} \right]\le {{\left\| {{x}^{*}}-{{x}_{t}} \right\|}^{2}}+\frac{2{{\rho }_{t}}}{n}\left\langle \nabla {{F}_{h_t}}({{x}_{t}}),{{x}^{*}}-{{x}_{t}} \right\rangle +\rho _{t}^{2}\mathbb{E}\left[ \left. {{\left\| {{\eta }_{t}} \right\|}^{2}} \right|{{x}_{t}} \right]$$
    Due to convexity of the functions $F$ and ${{F}_{h_t}}$ we have: $\left\langle \nabla {{F}_{h_t}}({{x}_{t}}),{{x}^{*}}-{{x}_{t}} \right\rangle \le {{F}_{h_t}}({{x}^{*}})-{{F}_{h_t}}({{x}_{t}})$. As a result we obtain the inequality
    \begin{eqnarray}
        2\rho_t\left({{F}_{h_t}}({{x}_t})-{{F}_{h_t}}({{x}^*})\right)
        &\le& -n\mathbb{E}\left[ \left. {{\left\| {{x}^{*}}-{{x}_{t+1}} \right\|}^{2}} \right|{{x}_{t}} \right] \\
        &+& n{{\left\| {{x}^{*}}-{{x}_{t}} \right\|}^{2}} +\rho _{t}^{2}n\mathbb{E}\left[ \left. {{\left\| {{\eta }_{t}} \right\|}^{2}} \right|{{x}_{t}} \right] \nonumber.
    \end{eqnarray}
    Let us take the mathematical expectation of both sides of this inequality:
    \begin{eqnarray}
        2\rho_t\left(\mathbb{E}	{{F}_{h_t}}({{x}_t})-{{F}_{h_t}}({{x}^*})\right)
        &\le& -n\mathbb{E}{{\left\| {{x}^{*}}-{{x}_{t+1}} \right\|}^{2}} \\
        &+& n\mathbb{E}{{\left\| {{x}^{*}}-{{x}_{t}} \right\|}^{2}} + \rho _{t}^{2}n\mathbb{E}\text{E}\left[ \left. {{\left\| {{\eta }_{t}} \right\|}^{2}} \right|{{x}_{t}} \right] \nonumber.
    \end{eqnarray}
    Denote
    $\eta _{t}^{k}=\frac{1}{2{{h}_{t}}}\left( F({{x}_{t}}+{{h}_{t}}\tilde{y}_{t}^{k})-F({{x}_{t}}-{{h}_{t}}\tilde{y}_{t}^{k}) \right)\tilde{y}_{t}^{k};$
    $${{\bar{\eta}}_{t}}=\mathbb{E}\left[\eta_{t}^{k} |x_t\right]=\mathbb{E}\left[\frac{1}{2{{h}_{t}}}\left( F({{x}_{t}}+{{h}_{t}}\tilde{y}_{t}^{1})-F({{x}_{t}}-{{h}_{t}}\tilde{y}_{t}^{1}) \right)\tilde{y}_{t}^{1}|x_t\right].$$  
    By (\ref{eqn9}) it holds, $\mathbb{E}\left[ \eta _{t}^{k}|{{x}_{t}} \right]={{{\bar{\eta }}}_{t}} =n^{-1}\nabla F_{h_t}(x_t)$ for all $k$. For the expression $\mathbb{E}\left[ \left. {{\left\| {{\eta }_{t}} \right\|}^{2}} \right|{{x}_{t}} \right]$ the following estimates holds true:
    \[
        \begin{array}{lcl}
            \mathbb{E}\left[ \left. {{\left\| {{\eta }_{t}} \right\|}^{2}} \right|{{x}_{t}} \right]
            &=&\mathbb{E}\left[ {{\left\| \frac{1}{K}\sum\limits_{k=1}^{K}{\eta _{t}^{k}} \right\|}^{2}}|{{x}_{t}} \right]=\mathbb{E}\left[ {{\left\| \frac{1}{K}\sum\limits_{k=1}^{K}{\left( \eta _{t}^{k}-{{{\bar{\eta }}}_{t}} \right)}+{{{\bar{\eta }}}_{t}} \right\|}^{2}}|{{x}_{t}} \right] \\
            &\le& 
            2\mathbb{E}\left[ {{\left\| \frac{1}{K}\sum\limits_{k=1}^{K}{\left( \eta _{t}^{k}-{{{\bar{\eta }}}_{t}} \right)} \right\|}^{2}}|{{x}_{t}} \right]+2\mathbb{E}\left[ \left\| \bar{\eta}_t \right\|^2|x_t \right]\\
            &=&
            \frac{2}{{{K}^{2}}}\sum\limits_{k=1}^{K}{\mathbb{E}\left[ {{\left( \eta _{t}^{k}-{{{\bar{\eta }}}_{t}} \right)}^{2}}|{{x}_{t}} \right]}+2\mathbb{E}\left[ \left\| \bar{\eta}_t \right\|^2|x_t \right]\\
            &\le& 
            \frac{2}{{{K}^{2}}}\sum\limits_{k=1}^{K}{\left( \mathbb{E}\left[ {{\left\| \eta _{t}^{k} \right\|}^{2}}|{{x}_{t}} \right]-\mathbb{E}\left[ \left\| \bar{\eta}_t \right\|^2|x_t \right] \right)}+2\mathbb{E}\left[ \left\| \bar{\eta}_t \right\|^2|x_t \right]\\
            &=&
            \frac{2}{{{K}^{2}}}\sum\limits_{k=1}^{K}{\left( \mathbb{E}\left[ {{\left\| \eta _{t}^{k} \right\|}^{2}}|{{x}_{t}} \right] \right)}-\frac{2}{K}\mathbb{E}\left[ \left\| \bar{\eta}_t \right\|^2|x_t \right]+2\mathbb{E}\left[ \left\| \bar{\eta}_t \right\|^2|x_t \right]\\
            &=&\frac{2}{K}\mathbb{E}\left[ {{\left\| \eta _{t}^{1} \right\|}^{2}}|{{x}_{t}} \right]+2\left( 1-\frac{1}{K} \right)\mathbb{E}\left[ \left\| \bar{\eta}_t \right\|^2|x_t \right]\\
            &\le&
            \frac{{2C{L}^{2}}}{Kn}+\frac{2(1-{1}/{K})}{{{n}^{2}}}{{\left\| \nabla {{F}_{h_t}}({{x}_{t}}) \right\|}^{2}} \le \frac{{2C{L}^{2}}}{Kn}+\frac{{2{L}^{2}}}{{{n}^{2}}}\left( 1-\frac{1}{K} \right).
        \end{array}
    \]
    As a result we have the inequality:
    \begin{eqnarray}
        D({x}_t) &=& 2\rho_t\left(\mathbb{E} {{F}_{h_t}}({{x}_t})-{{F}_{h_t}}({{x}^*})\right)\nonumber \\
        &\le&
        n\mathbb{E}{{\left\| {{x}^{*}}-{{x}_{t}} \right\|}^{2}} - n\mathbb{E}{{\left\| {{x}^{*}}-{{x}_{t+1}} \right\|}^{2}} + \rho _{t}^{2}n\mathbb{E}\mathbb{E}\left[ \left. {{\left\| {{\eta }_{t}} \right\|}^{2}} \right|{{x}_{t}} \right]\nonumber \\
        &\le&
        n\mathbb{E}{{\left\| {{x}^{*}}-{{x}_{t}} \right\|}^{2}} - n\mathbb{E}{{\left\| {{x}^{*}}-{{x}_{t+1}} \right\|}^{2}} + \frac{2\rho _{t}^{2}L^2}{K}\left(C+\frac{K}{n}\right).\nonumber
    \end{eqnarray}
    Now let us sum these inequalities over $t$ from 1 to $T$,
    $$ 2\sum_{t=1}^T\rho_t\left(\mathbb{E}	{{F}_{h_t}}({{x}_t})-{{F}_{h_t}}({{x}^*})\right) \le n\mathbb{E}{{\left\| {{x}^{*}}-{{x}_{1}} \right\|}^{2}}+\frac{2L^2}{K}\left(C+\frac{K}{n}\right)\sum_{t=1}^T\rho _{t}^{2}. $$
    First, consider the case of the fixed smoothing parameter $h_t=h$ for all $t$ and let $x^*$ be such that $F_h(x^*)=\min_{x\in X}F_h(x)$. Let us subdivide both sides of the above inequality by $2\sum_{t=1}^T\rho_t$. For the average point $\bar{x}_T=\sum_{t=1}^T\rho_t x_t/\sum_{t=1}^T\rho_t $ due to convexity of $F_h$ we have
    \begin{eqnarray}
        \mathbb{E} {{F}_{h}}({\bar{x}_t})-\min_{x\in X}F_h(x)
        &\le&
        \frac{\sum_{t=1}^T\rho_t\left(\mathbb{E}	{{F}_{h}}({{x}_t})-\min_{x\in X}F_h(x)\right)}{\sum_{t=1}^T\rho_t}\nonumber \\
        &\le&
        \frac{n\mathbb{E}{{\left\| {{x}^{*}}-{{x}_{1}} \right\|}^{2}}}{2\sum_{t=1}^T\rho_t}+\frac{L^2}{K}\left(C+\frac{K}{n}\right)\frac{\sum_{t=1}^T\rho_t^2}{\sum_{t=1}^T\rho_t}
        \label{ineq13}
    \end{eqnarray}
    For the constant step size ${{\rho }_{t}}\equiv \rho $ and $\bar{x}_T=T^{-1}\sum_{t=1}^T\rho_t$, we obtain
    \begin{equation}
        \label{ineq14}
        \mathbb{E}{{F}_{h}}({{\bar{x}}_{T}})-\min_{x\in X}F_h(x)\le \frac{n{{D}^{2}}}{2T\rho }+\frac{{{L}^{2}}\rho }{K}\left( {C}+\frac{K}{n} \right).
    \end{equation}
    For minimizing the right hand side of (\ref{ineq14}), let us take $\rho =\frac{D\sqrt{nK}}{L\sqrt{2T(C+K/n)}}$, then we obtain
    \begin{eqnarray}
        \mathbb{E}{{F}_{h}}({{\bar{x}}_{T}})-\min_{x\in X}F_h(x)
        &\le&
        \frac{LD}{\sqrt{T}}\sqrt{\frac{2n}{K}}\left(C+\frac{K}{n}\right)^{1/2}.\nonumber
    \end{eqnarray}
    For the variable steps $\rho_t =\frac{D\sqrt{nK}}{L\sqrt{2t(C+K/n)}}$ and $\bar{x}_t=\sum_{\tau=1}^t \rho_t x_t/\sum_{\tau=1}^t \rho_t$, from (\ref{ineq13}) we obtain
    \begin{eqnarray}
        \mathbb{E}{{F}_{h}}({{\bar{x}}_{t}})-\min_{x\in X}F_h(x)
        &=& \frac{DL}{\sqrt{2}}\sqrt{\frac{n}{K}}\left(C+\frac{K}{n}\right)^{1/2} \frac{\left(1+\sum_{\tau=1}^t\tau^{-1}\right)}{\sum_{\tau=1}^t\tau^{-1/2}} \nonumber \\
        &\le& \frac{DL}{\sqrt{2}}\sqrt{\frac{n}{K}}\left(C+\frac{K}{n}\right)^{1/2} \frac{2+\ln{t}}{\sqrt{t}-1}. \nonumber
    \end{eqnarray}
    Now consider the case of a variable smoothing parameter $h_t$ and let $x^*$ be such that $F(x^*)=\min_{x\in X}F(x)$. Since $|F(x)-F_{h_t}(x)|\le Lh_t$ for $x\in X$, then $|F_{h_t}(x_t)-F(x_t)|\le Lh_t$, $|F_{h_t}(x^*)-F(x^*)|\le Lh_t$, and we obtain the estimate:
\[\begin{array}{l}
        2\sum_{t=1}^T\rho_t\left(\mathbb{E}	{F}({{x}_t})-\min_{x\in X}{{F}}({{x}})\right)
 =      2\sum_{t=1}^T\rho_t\left(\mathbb{E}	{F}({{x}_t})-{{F}}({{x}^*})\right) \\
 \;\;\;\;\;\;\;\;\;\;\;\;\;\;\;\;\;\;\;\;\;\;\;\;\;\;\;\;\;\;
\le    2\sum_{t=1}^T\rho_t\left(\mathbb{E}	{{F}_{h_t}}({{x}_t})-{{F}_{h_t}}({{x}^*})\right) 
         +4\sum_{t=1}^T L\rho_t h_t \\
 \;\;\;\;\;\;\;\;\;\;\;\;\;\;\;\;\;\;\;\;\;\;\;\;\;\;\;\;\;\;
\le   nD^2+\frac{2L^2}{K}\left(C+\frac{K}{n}\right)\sum_{t=1}^T\rho _{t}^{2}+4\sum_{t=1}^T L\rho_t h_t. 
\end{array}\]
    By dividing this inequality by $\sum_{t=1}^t\rho_\tau$ for $ \bar{x}_t=\sum_{\tau=1}^t\rho_\tau x_\tau/\sum_{\tau=1}^t\rho_\tau $ and $\rho_t =\frac{D\sqrt{nK}}{L\sqrt{2t(C+K/n)}}$, $h_t=L\rho_t/K$, we obtain:
\begin{eqnarray}
        &&\mathbb{E}{F}({\bar{x}_t})-\min_{x\in X}{{F}}({{x}})
        \le
        \frac{nD^2}{2\sum_{\tau=1}^t\rho_\tau} +\frac{L^2}{K}\left(C+\frac{K}{n}\right)\frac{\sum_{\tau=1}^t\rho _{\tau}^{2}}{\sum_{\tau=1}^t\rho_\tau} +2L\frac{\sum_{\tau=1}^t \rho_\tau h_\tau}{\sum_{\tau=1}^t\rho_\tau} \nonumber \\
        &&=
        \frac{nD^2}{2\sum_{\tau=1}^t\rho_\tau} +\frac{L^2}{K}\left(2+C+\frac{K}{n}\right)\frac{\sum_{\tau=1}^t\rho _{\tau}^{2}}{\sum_{\tau=1}^t\rho_\tau}
        =\frac{DL}{2}\sqrt{\frac{n}{K} \left(2+C+\frac{K}{n}\right)}\frac{2+\ln{t}}{\sqrt{t}-1}. \nonumber
    \end{eqnarray}
\end{proof}
\begin{theorem}
    \label{Theorem 3} (Gaussian smoothing with batch estimation of the finite-difference gradient). In conditions of Theorem \ref{Theorem 2}, where instead ball smoothing the gaussian smoothing is used, i.e. random vectors $\tilde{y}_{t}^{k}$   in (\ref{eqn15}) have the standard normal distribution, the following estimates hold true. At a constant step $\rho =\frac{D}{L\sqrt{2T(1+(n-1)/K)}}$ and a constant smoothing parameter $h_t=h$ the average point ${{\bar{x}}_{T}}=\frac{1}{T}\sum_{t=1}^{T}{{{x}_{t}}}$ satisfies the relation:
    \begin{equation}
        \mathbb{E}{{F}_{h}}({{\bar{x}}_{T}})-\min_{x\in X}{{F}_{h}}({{x}})
        \le 
        \frac{LD}{\sqrt{T}}\sqrt{2\left(1+\frac{n-1}{K}\right)}.\nonumber
    \end{equation}
    For steps $\rho_t =\frac{D}{L\sqrt{2t(1+(n-1)/K)}}$  and $\bar{x}_t=\sum_{\tau=1}^t \rho_t x_t/\sum_{\tau=1}^t \rho_t$ it holds
    \begin{equation}
        \mathbb{E}{{F}_{h}}({{\bar{x}}_{t}})-\min_{x\in X}{{F}_{h}}({{x}})
        \le
        {LD\sqrt{2}}\left({1+\frac{n-1}{K}}\right)^{1/2}\frac{2+\ln{t}}{\sqrt{t}-1}.
        \nonumber
    \end{equation}
    For the variable steps $\rho_t =\frac{D}{L\sqrt{t(1+(n+3)/K)}}$ and smoothing parameters $h_t=L\rho_t/K$ for the averaged points $\bar{x}_t=\sum_{\tau=1}^t \rho_t x_t/\sum_{\tau=1}^t \rho_t$ it holds
    \begin{eqnarray}
        \mathbb{E}{F}({\bar{x}_t})-\min_{x\in X}{{F}}({{x}})
        &\le&
        \frac{DL}{2}\left({1+\frac{n+3}{K}}\right)^{1/2}
        \frac{2+\ln{t}}{\sqrt{t}-1}.
    \end{eqnarray}
\end{theorem}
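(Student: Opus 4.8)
The plan is to run the Theorem \ref{Theorem 2} argument essentially verbatim, replacing every ball–smoothing ingredient by its Gaussian counterpart and then re-collecting constants. The single structural change that propagates through everything is the unbiasedness relation: for the Gaussian kernel, identity (\ref{gaussfindif}) gives $\mathbb{E}[\eta_t^k\mid x_t]=\nabla F_{h_t}(x_t)$ \emph{directly}, whereas the ball identity (\ref{eqn9}) carried an extra factor $n^{-1}$. Hence, starting again from the nonexpansiveness of the projection $\pi_X$ and taking the conditional expectation of $\|x^{*}-x_{t+1}\|^{2}$, the inner-product term now reads $2\rho_t\langle\nabla F_{h_t}(x_t),x^{*}-x_t\rangle$ with no $1/n$ in front. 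Consequently there is no need to multiply the inequality through by $n$, and the factor $n$ that decorated $\|x^{*}-x_t\|^{2}$ and $\|x^{*}-x_{t+1}\|^{2}$ all through Theorem \ref{Theorem 2} simply disappears. Convexity of $F_{h_t}$ then yields, exactly as before, $2\rho_t\bigl(F_{h_t}(x_t)-F_{h_t}(x^{*})\bigr)\le \|x^{*}-x_t\|^{2}-\mathbb{E}[\|x^{*}-x_{t+1}\|^{2}\mid x_t]+\rho_t^{2}\,\mathbb{E}[\|\eta_t\|^{2}\mid x_t]$.

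Next I would bound the batch second moment $\mathbb{E}[\|\eta_t\|^{2}\mid x_t]$ by the same variance decomposition as in Theorem \ref{Theorem 2}, but feeding in the Gaussian estimates: the per-direction bound $\mathbb{E}[\|\eta_t^1\|^{2}\mid x_t]\le nL^{2}$ from (\ref{Nesterov_Spokoiny}) in place of the ball bound (\ref{shamirinequality}), together with $\|\bar\eta_t\|^{2}=\|\nabla F_{h_t}(x_t)\|^{2}\le L^{2}$ (again without $1/n$, since $F_{h_t}$ inherits the Lipschitz constant $L$). Writing $\eta_t=\frac1K\sum_k\eta_t^k$ as its mean plus a zero-mean fluctuation and using independence gives a bound of the form $\mathbb{E}[\|\eta_t\|^{2}\mid x_t]\le L^{2}\bigl(1+(n-1)/K\bigr)$ up to the absolute constant produced by the decomposition, where the $n/K$ arises from the per-sample second moment and the residual $1-1/K$ from the squared mean $\le L^{2}$. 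This quantity is what replaces $\frac{2L^{2}}{K}(C+K/n)$ of Theorem \ref{Theorem 2}.

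With this noise coefficient the remaining steps are mechanical and identical to Theorem \ref{Theorem 2}: sum the per-step inequality over $t$, divide by $2\sum_t\rho_t$, use convexity of $F_h$ to pass from $F_h(x_t)$ to $F_h(\bar x_t)$, and substitute the prescribed step sizes. For the constant step one minimizes the right-hand side over $\rho$, producing $\rho=\tfrac{D}{L\sqrt{2T(1+(n-1)/K)}}$ and the first displayed bound; for the decreasing steps one uses $\sum_{\tau\le t}\tau^{-1}\le 1+\ln t$ and $\sum_{\tau\le t}\tau^{-1/2}\ge\sqrt t-1$ to extract the factor $(2+\ln t)/(\sqrt t-1)$, giving the second bound.

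The only genuinely new ingredient is the third, variable-smoothing estimate, and this is where I expect the main difficulty. As in the last part of Theorem \ref{Theorem 2}, I would return from $F_{h_t}$ to $F$ through the smoothing bias, which for the Gaussian kernel is $|F(x)-F_{h_t}(x)|\le Lh_t\,\mathbb{E}\|\eta\|\le Lh_t\sqrt n$ rather than the $Lh_t$ available for the ball; the resulting surplus terms $\sum_t L\rho_t h_t$ are then tied back to the variance by the coupling $h_t=L\rho_t/K$, so that every extra contribution is again proportional to $\sum_t\rho_t^{2}$. Collecting the Gaussian moment constants through this bias–variance trade-off is the delicate book-keeping step: it is what converts the effective coefficient $(1+(n-1)/K)$ of the fixed-$h$ analysis into $(1+(n+3)/K)$ and pins down the leading constant $DL/2$ in the final estimate. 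Once the combined coefficient is identified, the substitution $\rho_t=\tfrac{D}{L\sqrt{t(1+(n+3)/K)}}$ together with the same logarithmic-sum bounds closes the argument.
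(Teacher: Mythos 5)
Your proposal is essentially the paper's own proof: the paper disposes of Theorem~\ref{Theorem 3} in one sentence, saying only that one repeats the argument of Theorem~\ref{Theorem 2} with (\ref{eqn9}), (\ref{shamirinequality}) replaced by (\ref{gaussfindif}), (\ref{Nesterov_Spokoiny}), and you carry out exactly that substitution, correctly identifying the two structural changes (the unbiasedness $\mathbb{E}[\eta_t^k\mid x_t]=\nabla F_{h_t}(x_t)$ without the factor $n^{-1}$, hence no multiplication by $n$, and the per-sample second moment $nL^2$ combining with $\|\bar\eta_t\|^2\le L^2$ to give $2L^2(1+(n-1)/K)$, which reproduces the first two bounds). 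The only caveat concerns the third estimate: with the Gaussian bias $|F(x)-F_h(x)|\le Lh\sqrt{n}$ your bookkeeping yields the coefficient $1+(n-1+2\sqrt{n})/K$ rather than the stated $1+(n+3)/K$ (these agree only at $n=4$, and for $n>4$ yours is larger), so as outlined your route does not quite pin down the advertised constant --- but the paper itself supplies no computation there either.
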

\begin{proof}
    The proof of this theorem basically repeats the proof of Theorem \ref{Theorem 2}, only instead of properties (\ref{eqn9}), (\ref{shamirinequality}) the relations (\ref{gaussfindif}), (\ref{Nesterov_Spokoiny}) are used. 
\end{proof}		
		There are related results on the rate of convergence of finite-difference methods for smooth functions (cf. \cite{Duchi_2011,Duchi_2015,shamir2017optimal}). In Theorems \ref{Theorem 2}, \ref{Theorem 3} we do not assume differentiability of the objective function. From the obtained  results (Theorem \ref{Theorem 2}) on the rate of convergence of the stochastic smoothing method on the class of convex nonsmooth functions, the following conclusions can be drawn regarding the parameters of the method in optimizing the smoothed function ${{F}_{h}}$. The step multiplier $\rho $ should be proportional to $\sqrt{n}$, the square root of the dimension of the space of the variables, and the estimate $D$ of the Euclidean distance from the starting point ${{x}_{0}}$ to the minimum point ${{x}^{*}}$.

The direction ${{\eta }_{t}}$ in the method should be normalized to the Lipschitz constant $L$ of the objective function, or to the estimation of this constant as in the adaptive methods Adam \cite{Kingma_Ba_2017} and AdaGrad \cite{Duchi_2011}. The batch size parameter $K$ allows to reduce dependence of the rate of convergence on the dimension $n$.

\subsection{Stochastic finite-difference gradient methods for nonconvex problems}

In the proposed approach to the optimization of nonsmooth and discontinuous functions under constraints, first the initial problem (\ref{eqn1}) is reduced by the method of exact nonsmooth penalties to the problem of unconditional optimization (\ref{eqn4}), (\ref{eqn5}) (see Sec. \ref{sec:penalty}), then the modified objective function is smoothed and approximated by smooth mathematical expectation functions (\ref{Steklov-Schwartz}), (\ref{ballsmoothing}), (\ref{gaussmoothing}) (Sec. \ref{sec:smoothing}).

In conditions of Theorem \ref{Theorem1} a sequence of smoothed averaged functions ${{F}_{h}}$  epi-graphically converges to $F$ as $h\to 0$. Then by \cite[Theorem 7.33]{RocW98} global minimums of ${{F}_{h}}$ converge to the global minimums of $F$ as $h\to 0$. Convergence of local minimums was studied in \cite{ENW95,ermol1998stochastic}. For the gradients of the smoothed functions stochastic finite-difference representations (\ref{eqn9}), (\ref{gaussfindif}) hold true. The latter can be used for optimization of the smoothed functions by contemporary efficient stochastic optimization methods \cite{Bottou_Curtis_Nocedal_2018}, in particular:
 Nemirovsky-Yudin trajectory averaging method \cite{Nemirovsky_Yudin_1983,Nemirovski_2009}, Polyak's method (heavy ball) \cite{Pol87};
Gelfand-Zetlin-Nesterov method (ravine step method) \cite{Gelfand_Tsetlin_1962,Nesterov_1983} and its stochastic generalizations;
Adaptive stochastic gradient methods \cite{Duchi_2011,Kingma_Ba_2017}.
Performance of the one of these method \cite{Nemirovski_2009} in the convex case is evaluated in Theorems \ref{Theorem 2}, \ref{Theorem 3}. Generalizations of these methods to the nonconvex smooth and nonsmooth cases are available in the works of \cite{Ermoliev_Norkin_2003,Gup79,Mai_Johansson_2020,MGN87,Ruszczynski_2020}.

\section{Global optimization by smoothing}

One heuristic background of applying smoothing method for the global optimization purposes is that smoothing eliminates shallow local minimums and little chainges wide and deep ones. The other justification is provided by properties of critical points/paths of smoothed functions, cf. \cite[p. 135-137]{MGN87}, \cite{Arikan_Burachik_Kaya_2020,Burachik_Kaya_2021,Norkin_2020}.

The smoothing method in global optimization consists in approximation of a multi-extremal function by a sequence of the so-called smoothed (or averaged) functions and optimization of the latter by some suitable available methods. When the smoothing parameter goes down, the averaged functions uniformly (or epi-graphically) converge to the original function, so the global minima of the approximate functions converge to the global minima of the original one.

The transition from a minimum point of one smoothed function to the starting point for minimizing another smoothed function is carried out by the Gelfand-Tsetlin-Nesterov ravine step method (\cite{Gelfand_Tsetlin_1962,Nesterov_1983}). In this method, if we have two points at the bottom of a valley, we can find the direction of the valley and guess a starting point to descend further to the bottom of the valley, and so on.

To minimize smoothed functions ${{F}_{h}}(x)$, any smooth optimization methods can be used, if their gradients can be calculated analytically, numerically, or estimated statistically. This is easy to do for one-dimensional functions \cite{Norkin_2020} and for separable functions $F(x)=\sum\nolimits_{i}{{{f}_{i1}}({{x}_{1}})\times ...\times {{f}_{in}}({{x}_{n}})}$. For example, \cite{Arikan_Burachik_Kaya_2020,Burachik_Kaya_2021} exploits analytical smoothing of the polynomial functions to derive differential equation for critical curves of the set $\{(x,h):\nabla{{F}_{h}}(x)=0, h\ge 0\}$.

In general, stochastic finite-difference methods (\cite{Bottou_Curtis_Nocedal_2018,Duchi_2011,Duchi_2015,Ermoliev_1976,Ermoliev_Norkin_2003,Gup79,Kingma_Ba_2017,MGN87,Pol87}) can be used for minimization of functions ${{F}_{h}}(\cdot)$ at each smoothing stage.

\section{Algorithm of the stochastic successive smoothing method}

Algorithm of the stochastic successive smoothing method consists of the following steps.
\begin{enumerate}
    \item Reduce the problem of conditional optimization to the problem of unconditional optimization of some coercive function;
    \item Select a decreasing sequence of smoothing parameters with a sufficiently large initial value of the smoothing parameter;
    \item Minimize successively the smoothed functions by some effective deterministic or stochastic method, using the results of minimizing the previous smoothed functions to start minimization of the subsequent smoothed function.
\end{enumerate}
The results of testing of the stochastic successive smoothing method on some small dimensional global optimization (box constrained) problems are presented in \cite{Knopov_Norkin_2022}, \cite{Norkin_2020,Norkin_2022}. Below we present an example of solution a larger test problem.
\begin{example}
    \label{Example 1} Largest Small Polygon \cite{Dolan_More_Munson_2004}. Find the polygon of maximal area, among polygons with $n$  sides and diameter $d\le 1$. If $({{r}_{i}},{{\theta }_{i}})$  are the coordinates of the vertices of the polygon, the problem is to find
    $$f_{n}^{*}=\underset{r,\,\varphi }{\mathop{\max }}\,\left[ f(r,\varphi )=\frac{1}{2}\sum\limits_{i=1}^{n-1}{{{r}_{i+1}}{{r}_{i}}\sin {{\varphi }_{i+1}}} \right]$$ 
    subject to the constraints:
\[\begin{array}{l}
        {{\left( r_{i}^{2}+r_{j}^{2}-2{{r}_{i}}{{r}_{j}} \cos \sum\nolimits_{k=i+1}^{j}{{{\varphi }_{k}}} \right)}^{{1}/{2}\;}}\le 1,\;\;\; 1\le i<j\le n; \\
        {{\theta }_{i}}=\sum\nolimits_{k=1}^{i}{{{\varphi }_{k}}},	1\le i\le n;	\;\;\;\sum_{i=1}^{n}{{{\varphi }_{i}}}\le \pi ; \\
        {{r}_{i}}\ge 0,\;\; 0\le {{\varphi }_{i}}\le {2\pi }/{n},\;\;	1\le i\le n;\;\;\;\;	{{r}_{1}}={{\varphi }_{1}}=0. 
\end{array}\]
    Here $r=({{r}_{1}},...,{{r}_{n}})$,  $\varphi =({{\varphi }_{1}},...,{{\varphi }_{n}})$. It is known that $f_{3}^{*}={\sqrt{3}}/{4}\;\approx 0.4330$, $f_{4}^{*}=0.5$, $\underset{n\to \infty }{\mathop{\lim }}\,f_{n}^{*}={\pi }/{4}\;\approx 0.7854$.  Optimal polygons approach to the circle of diameter 1 as $n\to \infty $. First we transform the constrained problem to an unconstrained problem $\underset{r,\,\varphi }{\mathop{\max }}\,{{F}_{3}}(r,\varphi )$  by the following penalty functions with penalty coefficients ${{p}_{1}},{{p}_{2}},{{p}_{3}}>0$: 	
    \begin{enumerate}
        \item ${{F}_{1}}(r,\varphi ) = \left\{ \begin{matrix} f(r,\varphi ), & \sum\nolimits_{i=1}^{n}{{{\varphi }_{i}}}\le \pi, \\ f(r,\lambda \varphi )+{{p}_{1}}\left( \sum\nolimits_{i=1}^{n}{{{\varphi }_{i}}}-\pi  \right), & otherwise, \\ \end{matrix} \right.$, $\lambda ={\pi }/{\sum\nolimits_{i=1}^{n}{{{\varphi }_{i}}}}\;$,  ${{p}_{1}}=1$;    
        \item ${{F}_{2}}(r,\varphi )={{F}_{1}}(r,\varphi )+{{p}_{2}}\max \left\{ 0,\sum\nolimits_{i<j}{\left( \sqrt{r_{diag}} - 1 \right)} \right\}$,  ${{p}_{2}}=1$, $ r_{diag} = r_{i}^{2}+r_{j}^{2}-2{{r}_{i}}{{r}_{j}}\cos \sum\nolimits_{k=i+1}^{j}{{{\varphi }_{k}}} $;
        \item ${{F}_{3}}(r,\varphi )={{F}_{2}}(\hat{r},\hat{\varphi })+{{p}_{3}}\left( \left\| r-\hat{r} \right\|+\left\| \varphi -\hat{\varphi } \right\| \right)$,	${{p}_{3}}=10$.	
    \end{enumerate}
    Here $\hat{r}$ and $\hat{\varphi }$  are projections of vectors $r$ and $\varphi $ on the cubes ${{[0,1]}^{n}}$ and ${{[0,{2\pi }/{n}\;]}^{n}}$, respectively. We solved the problem  $\underset{r,\,\varphi }{\mathop{\max }}\,{{F}_{3}}(r,\varphi )$ by the stochastic smoothing method, results are presented in the Table 1. Further examples of solving stochastic storage optimal control problems by the stochastic successive smoothing method can be found in \cite{Knopov_Norkin_2022}.
\end{example}
\begin{table*}[h] \label{polygon}
    \caption{Optimal polygons}
    \begin{tabular}{rrrrrrrrr}
        \hline
            $n$ 	&3	&4		&20	&50	&100	&200	&500\\
            Smoothing \\Iterations	&100	&200		&500	&1000	&2000	&2000	&4000\\
            Func. calc.	&4040	&11256		&132264	&620620	&2465232	&3521760	&15627906\\
            Max. achived	&0.4300	&0.4994		&0.7680	&0.7763	&0.7788	&0.7697	&0.7588\\
            Ideal value 	&0.4330	&0.5000		&0.7854	&0.7854	&0.7854	&0.7854	&0.7854\\
        \hline
    \end{tabular}
\end{table*}

\section{Conclusions}	
In the paper, the stochastic successive smoothing method (\cite{Norkin_2020}) is extended to general conditional optimization problems with nonsmooth and discontinuous functions. Initially, the conditional problem is transformed to an unconditional one by a modified exact penalty function method, in which one need not to select the value of the penalty factor. Then the unconditional problem is approximated by a sequence of smoothed (averaged) functions starting from strong smoothing with a gradual decrease in the degree of smoothing. Modern methods of stochastic gradient optimization can be used to optimize approximate smoothed functions. In this respect, we show that gradients of the smoothed functions can be estimated by means of finite differences in stochastic directions. The transition from the optimization of one smoothed function to the optimization of the next function is carried out using the Gelfand-Zetlin-Nesterov method but unlike this method for the local decent we use many step stochastic procedure. The applied sequential smoothing eliminates shallow local minima of the original problem, so the method gets some global properties. 

\section{Acknowledgements}
The research of the first author was supported by the Volkswagen Foundation (Az.: 9C090, 2022) and the National Research Fund of Ukraine (Project 2020.02/0121). The research of the second author was supported by the German Research Foundation (Project 416228727–SFB 1410). The work of the third author was supported by the National Research Fund of Ukraine (Project 2020.02/0121).

\bibliographystyle{splncs04}
\bibliography{paper}

\begin{thebibliography}{10}
\providecommand{\url}[1]{\texttt{#1}}
\providecommand{\urlprefix}{URL }
\providecommand{\doi}[1]{https://doi.org/#1}

\bibitem{Arikan_Burachik_Kaya_2020}
Ar{\i}kan, O., Burachik, R.S., Kaya, C.Y.: Steklov regularization and
  trajectory methods for univariate global optimization. J. of Global
  Optimization  \textbf{76},  91--120 (2020)

\bibitem{Batukhtin_Mayboroda_1995}
Batukhtin, V.D., Mayboroda, L.A.: Discontinuous Extremum Problems. Gippokrat,
  St. Petersburg (1995), (In Russian).

\bibitem{Batukhtin_1993}
Batukhtin, V.: On solving discontinuous extremal problems. Journal of
  Optimization Theory and Applications  \textbf{77},  575--589 (1993)

\bibitem{Bottou_Curtis_Nocedal_2018}
Bottou, L., Curtis, F.E., Nocedal, J.: Optimization methods for large-scale
  machine learning. SIAM review  \textbf{60}(2),  223--311 (2018)

\bibitem{Burachik_Kaya_2021}
Burachik, R.S., Kaya, C.Y.: Steklov convexification and a trajectory method for
  global optimization of multivariate quartic polynomials. Mathematical
  Programming  \textbf{189}(1-2),  187--216 (2021)

\bibitem{burke2020gradient}
Burke, J.V., Curtis, F.E., Lewis, A.S., Overton, M.L., Sim{\~o}es, L.E.A.:
  Gradient sampling methods for nonsmooth optimization. In: Bagirov, A.M.,
  et~al. (eds.) Numerical nonsmooth optimization, pp. 201--225. Springer (2020)

\bibitem{Chagas_2017}
Chagas, J.Q., Diehl, N.M., Guidolin, P.L.: Some properties for the steklov
  averages. arXiv preprint arXiv:1707.06368  (2017)

\bibitem{Clarke_1990}
Clarke, F.H.: Optimization and nonsmooth analysis. SIAM (1990)

\bibitem{Conn_Mongeau_1998}
Conn, A.R., Mongeau, M.: Discontinuous piecewise linear optimization.
  Mathematical programming  \textbf{80}(3),  315--380 (1998)

\bibitem{Dolan_More_Munson_2004}
Dolan, E.D., Mor{\'e}, J.J., Munson, T.S.: Benchmarking optimization software
  with cops 3.0. Tech. rep., Argonne National Lab., Argonne, IL (US) (2004)

\bibitem{Duchi_2011}
Duchi, J., Hazan, E., Singer, Y.: Adaptive subgradient methods for online
  learning and stochastic optimization. J. of machine learning research
  \textbf{12}(7),  2121--2159 (2011)

\bibitem{Duchi_2015}
Duchi, J.C., Jordan, M.I., Wainwright, M.J., Wibisono, A.: Optimal rates for
  zero-order convex optimization: The power of two function evaluations. IEEE
  Transactions on Information Theory  \textbf{61}(5),  2788--2806 (2015)

\bibitem{ermol1998stochastic}
Ermol'ev, Y.M., Norkin, V.: Stochastic generalized gradient method for
  nonconvex nonsmooth stochastic optimization. Cybernetics and Systems Analysis
   \textbf{34}(2),  196--215 (1998). \doi{10.1007/BF02742069}

\bibitem{Ermoliev_1976}
Ermoliev, Y.M.: Methods of stochastic programming. Nauka, Moscow  (1976)

\bibitem{ENW95}
Ermoliev, Y.M., Norkin, V.I., Wets, R.J.B.: The minimization of semicontinuous
  functions: Mollifier subgradients. {SIAM} Journal on Control and Optimization
   \textbf{33},  149--167 (1995)

\bibitem{Ermoliev_Norkin_2003}
Ermoliev, Y.M., Norkin, V.: Solution of nonconvex nonsmooth stochastic
  optimization problems. Cybernetics and Systems Analysis  \textbf{39},
  701--715 (2003). \doi{10.1023/B:CASA.0000012091.84864.65}

\bibitem{Floudas2009}
Floudas, C.A., Pardalos, P.M. (eds.): Encyclopedia of Optimization. Springer
  (2009)

\bibitem{Floudas_Pardalos_1999}
Floudas, C., Pardalos, P., Adjiman, C. (eds.): Handbook of Test Problems in
  Local and Global Optimization. Kluwer Academic Publishers, Dordreht (1999)

\bibitem{Galvan_2021}
Galvan, G., Sciandrone, M., Lucidi, S.: A parameter-free unconstrained
  reformulation for nonsmooth problems with convex constraints. Computational
  Optimization and Applications  \textbf{80},  33--53 (2021)

\bibitem{Gelfand_Tsetlin_1962}
Gel'fand, I.M., Tsetlin, M.L.: Some methods of control for complex systems.
  Russian Mathematical Surveys  \textbf{17}(1),  95--117 (1962).
  \doi{10.1070/rm1962v017n01abeh001124}

\bibitem{Gup79}
Gupal, A.M.: Stochastic Methods for Solving Nonsmooth Extremal Problems.
  Naukova Dumka, Kyiv (1979), (in Russian)

\bibitem{Gupal_Norkin_1977}
Gupal, A.M., Norkin, V.I.: Algorithm for the minimization of discontinuous
  functions. Cybernetics  \textbf{13}(2),  220--223 (1977).
  \doi{10.1007/BF01073313}

\bibitem{Horst_Pardalos_2013}
Horst, R., Pardalos, P.M.: Handbook of global optimization, vol.~2. Springer
  (2013)

\bibitem{Imo_Leech_1984}
Imo, I.I., Leech, D.J.: Discontinuous optimization in batch production using
  sumt. The International Journal of Production Research  \textbf{22}(2),
  313--321 (1984)

\bibitem{Kingma_Ba_2017}
Kingma, D.P., Ba, J.: Adam: A method for stochastic optimization. arXiv
  preprint arXiv:1412.6980v9  (2017). \doi{10.48550/arXiv.1412.6980}

\bibitem{Knopov_Norkin_2022}
Knopov, P., Norkin, V.: Stochastic optimization methods for the stochastic
  storage process control. In: Blondin, M.J., et~al. (eds.) Intelligent Control
  and Smart Energy Management. Renewable Resources and Transportation, Springer
  Optimization and Its Applications, vol.~181, pp. 79--111. Springer Nature
  Switzerland AG (2022). \doi{10.1007/978-3-030-84474-5\_3}

\bibitem{longo2022rham}
Longo, M., Opschoor, J.A.A., Disch, N., Schwab, C., Zech, J.: De rham
  compatible deep neural network fem. arXiv preprint arXiv:2201.05395  (2022)

\bibitem{Mai_Johansson_2020}
Mai, V., Johansson, M.: Convergence of a stochastic gradient method with
  momentum for non-smooth non-convex optimization. In: International conference
  on machine learning. pp. 6630--6639. PMLR (2020)

\bibitem{Mayne_Polak_1984}
Mayne, D.Q., Polak, E.: Nondifferential optimization via adaptive smoothing.
  Journal of Optimization Theory and Applications  \textbf{43}(4),  601--613
  (1984)

\bibitem{MGN87}
Mikhalevich, V.S., Gupal, A.M., Norkin, V.I.: Methods of Nonconvex
  Optimization. Nauka, Moscow (1987), (in Russian)

\bibitem{Nemirovski_2009}
Nemirovski, A., Juditsky, A., Lan, G., Shapiro, A.: Robust stochastic
  approximation approach to stochastic programming. SIAM Journal on
  optimization  \textbf{19}(4),  1574--1609 (2009)

\bibitem{Nemirovsky_Yudin_1983}
Nemirovsky, A., Yudin, D.: Informational Complexity and Efficient Methods for
  Solution of Convex Extremal Problems. J. Wiley \& Sons, New York (1983)

\bibitem{Nesterov_1983}
Nesterov, Y.: A method of solving a convex programming problem with convergence
  rate. Soviet Math.\ Dokl.  \textbf{27}(2),  372--376 (1983)

\bibitem{Nesterov_Spokoiny_2017}
Nesterov, Y., Spokoiny, V.: Random gradient-free minimization of convex
  functions. Foundations of Computational Mathematics  \textbf{17},  527--566
  (2017)

\bibitem{Norkin_2020}
Norkin, V.I.: A stochastic smoothing method for nonsmooth global optimization.
  Cybernetics and Computer Technologies pp. 5--14 (2020).
  \doi{10.34229/2707-451X.20.1.1}

\bibitem{Norkin_2022}
Norkin, V.I.: The projective exact penalty method for general constrained
  optimization. Preprint. https://optimization-online.org/?p=20458  (2022)

\bibitem{Pol87}
Polyak, B.: Introduction to Optimization. Optimization Software, Inc. Publ.
  Division (1987)

\bibitem{RocW98}
Rockafellar, R.T., Wets, R.J.B.: Variational analysis. Grundlehren der
  mathematischen Wissenschaften, Springer, 1st ed. 1998, 3rd printing, springer
  edn. (2009)

\bibitem{Ruszczynski_2020}
Ruszczy{\'n}ski, A.: Convergence of a stochastic subgradient method with
  averaging for nonsmooth nonconvex constrained optimization. Optimization
  Letters  \textbf{14}(7),  1615--1625 (2020)

\bibitem{Sergeyev_Kvasov_2017}
Sergeyev, Y.D., Kvasov, D.E.: Deterministic Global Optimization. Springer, New
  York (2017). \doi{10.1007/978-1-4939-7199-2}

\bibitem{shamir2017optimal}
Shamir, O.: An optimal algorithm for bandit and zero-order convex optimization
  with two-point feedback. The Journal of Machine Learning Research
  \textbf{18}(1),  1703--1713 (2017)

\bibitem{Steklov_1907}
Stekloff, W.: Sur les expressions asymptotiques de certaines fonctions,
  d\'{e}finies par les \'{e}quations diff\'{e}rentielles lin\'{e}aires du
  second ordre, et leurs applications au probl\'{e}me du d\'{e}veloppement
  d'une fonction arbitraire en s\'{e}ries proc\'{e}dant suivant les-dites
  fonctions. Communications de la Soci\'{e}t\'{e} math\'{e}matique de Kharkow
  \textbf{10},  97--199 (1907), \url{http://mi.mathnet.ru/eng/khmo/v10/p97},
  (In French).

\bibitem{Strongin_Sergeyev_2014}
Strongin, R.G., Sergeyev, Y.D.: Global Optimization with Non-Convex
  Constraints: Sequential and Parallel Algorithms. Kluwer Academic Publishers,
  Dordrecht (2000). 3rd ed. By Springer (2014) (2014)

\bibitem{Ustimenko_Prokhorenkova_2020}
Ustimenko, A., Prokhorenkova, L.: Stochasticrank: Global optimization of
  scale-free discrete functions. In: Int. Conf. on Machine Learning. pp.
  9669--9679. PMLR (2020)

\bibitem{Yang_2010}
Yang, X.S.: Engineering optimization: an introduction with metaheuristic
  applications. John Wiley \& Sons (2010)

\end{thebibliography}

\end{document}